\g@addto@macro{\endabstract}{\@setabstract}
\newcommand{\authorfootnotes}{\renewcommand\thefootnote{\@fnsymbol\c@footnote}}%
\newtheorem{theorem}{Theorem}[section]
\newtheorem{lemma}[theorem]{Lemma}
\newtheorem{proposition}[theorem]{Proposition}
\newtheorem{corollary}[theorem]{Corollary}
\newtheorem{conjecture}[theorem]{Conjecture}
\theoremstyle{definition}
\newtheorem{definition}[theorem]{Definition}
\newtheorem{example}[theorem]{Example}
\newtheorem{remark}[theorem]{Remark}
\theoremstyle{remark}
\numberwithin{equation}{section}
\renewcommand{\thefootnote}{\fnsymbol{footnote}}
\def\F{\mathcal{F}}
\def\E{\mathcal{E}}
\def\T{\mathcal{T}}
\def\lhd{\vartriangleleft}
\def\Mod{\mathrm{Mod}}
\def\Hom{\mathrm{Hom}}
\def\Iso{\mathrm{Iso}}
\def\Aut{\mathrm{Aut}}
\def\Inn{\mathrm{Inn}}
\def\Out{\mathrm{Out}}
\def\Syl{\mathrm{Syl}}
\def\rank{\mathrm{rank}_l}
\def\Br{\mathrm{Br}}
\def\Bl{\mathrm{Bl}}
\def\Tr{\mathrm{Tr}}
\def\max{\mathrm{max}}
\begin{document}
\title{On the local rank of fusion systems}
\thanks{Supported by NSFC (11971155, 12171142, 11871083)}

\subjclass[2010]{Primary 20C20, 20D15}


\date{}

\dedicatory{}

\maketitle
\footnotetext{\textit{E-mail addresses}: jliao@pku.edu.cn (J. Liao), ghliu@hubu.edu.cn (H. Liu), bwang@buaa.edu.cn (B. Wang), jzhang@pku.edu.cn (J. Zhang).}
\begin{center}
\normalsize
\textsc{Jun Liao\textsuperscript{1}, Heguo Liu\textsuperscript{1}, Baoshan Wang\textsuperscript{2}\footnote{Corresponding author}, and Jiping Zhang\textsuperscript{3}}
\end{center}
\bigskip
\textsuperscript{1}\textit{School of Mathematics and Statistics, Hubei University, Wuhan $430062$, China}\\
\textsuperscript{2}\textit{School of Mathematics and System Sciences, Beihang University, Beijing $100191$, China}\\
\textsuperscript{3}\textit{School of Mathematical Sciences, Peking University, Beijing $100871$, China}

\begin{abstract}
In this paper we define the notion of local rank for fusion systems
so as to reformulate the Alperin's weight conjecture
in the framework of block fusion systems following the work by Kn\"orr and Robinson.
\end{abstract}

\section{Introduction}
A saturated fusion system $\F$ over a finite
$p$-group $S$  \cite[Definition 1.2]{BLO2} \cite[Proposition I.2.5]{AKO}
is a subcategory of the group category whose objects are the subgroups of $S$, 
and morphisms are group monomorphims satisfying axioms which mimic those satisfied
by morphisms induced by conjugation in some ambient group $G$.
More precisely, conjugation by elements in $S$ need to be in the category, 
every morphism is the composite of an $\F$-isomorphism followed by
an inclusion, 
and furthermore two non-trivial conditions need to be satisfied, 
called the Sylow and extension axiom, 
which we recall below together with some terminology.
We refer to \cite{AKO} and \cite{BLO2} for detailed information, 
and also direct the reader to Puig's original work \cite{Puig:2006a}, 
where he called them Frobenius category.

Let $p$ be a prime and $G$ a finite group with a Sylow $p$-subgroup $S$.
The fusion system $\mathcal{F}_S(G)$ of the finite group $G$ over $S$ is the category whose objects are the subgroups of $S$, 
and whose morphisms are induced by conjugation by elements of $G$, that is
$\mathrm{Hom}_{\mathcal{F}_S(G)}(Q, R)=\mathrm{Hom}_G(Q, R)$ for $Q, R\leq S$.

Let $G$ be a finite group, 
$p$ a prime  dividing the order of $G$, 
and $k$ an algebraically closed field of characteristic $p$.
Let $b$ be a block of $kG$, the indecomposable factor of the group algebra $kG$.
Alperin and Brou\'e introduced the $G$-poset of Brauer pairs associated to a block $b$ of $kG$, 
which then was developed by Brou\'e and Puig.
The elements of this poset are $(kG, b, G)$-Brauer pairs $(Q, e)$ where $Q$ is a $p$-subgroup of  $G$ and
$e$ is a block of $kC_{G}(Q)$ such that $\Br_{Q}(b)e=e$, 
or equivalently $e^{G}=b$.
The inclusion is defined thought the Brauer homomorphism and the first component of any maximal pair is a defect group of $b$.
Fix a maximal $(kG, b, G)$-Brauer pair $(P, e_P )$.
Let $Q\leq P$, there exists a unique block $e_{Q}$ such that $(Q, e_{Q})\leq (P, e_{P})$.
The fusion system $\F$ of $kGb$
(or of $b$) over $P$ is the category $\F_{(P, e_P )}(kG, b, G)$ whose objects are the subgroups of $P$, 
and whose morphisms are induced by conjugation in the $G$-poset of $(kG, b, G)$-Brauer pair contained in $(P, e_{P})$, i.e., 
$\mathrm{Hom}_{\F_{(P, e_P )}(kG, b, G)}(Q, R)=\{c_{g} \mid g\in G, {^{g}(Q, e_{Q})}\leq (R, e_{R})\}$ for $Q, R\leq P$.
Then $\F_{(P, e_P )}(kG, b, G)$ is a saturated fusion system over $P$ whenever $(P, e_P )$ is a maximal $(kG, b, G)$-Brauer pair.

R. Kn\"orr and G.R. Robinson used alternating sum to reformulate the Alperin's weight conjecture in \cite{KR}.
In order to calculate the alternating sum by induction, 
Robinson defined the notion of $p$-local rank of a finite group $G$.
Motivated by this, we seek for a proper notion of normalised centric-radical chain for fusion systems
in order to give a reformulation of Alperin's weight conjecture in the context of
block fusion systems.
We refer to \cite[Section 10.7]{L} for more information on Alperin’s weight conjecture and alternating sums.

The paper is organized as follows.
In Section 2 we collect some definitions and notion for fusion systems and give some notations.
In Sections 3 we define the local rank for fusion systems and present some properties of the related subgroups.
We give a reformulation of Alperin's weight conjecture in Section 4.
Finally, in Section 5 we give necessary and suficient conditions for a fusion system with small local rank.

\section{Preliminaries}

In this section we collect the definitions for fusion systems and some terminology.
\subsection{Fusion systems}

The original notion of an abstract fusion system is due to Puig. The following modified version by Broto-Levi-Oliver \cite[1.1]{BLO2} is  equivalent to Puig's definition which is called divisible category \cite{Puig:2006a}.

\begin{definition}[\cite{BLO2, Puig:2006a}]\label{fus}
A fusion system $\F$ over a finite $p$-group $S$ is a subcategory of the group category, 
where the objects of $\F$ is the set of the subgroups of $S$, and which satisfies the following two properties for all $P, Q\leq S$:

\begin{itemize}
  \item[(1)] $\Hom_S(P, Q)\subseteq\mathrm{Mor}_\F(P, Q)\subseteq \mathrm{Inj}(P, Q)$, where $\mathrm{Inj}(P, Q)$ is the set of the injective group homomorphisms from $P$ to $Q$; and
  \item[(2)] Each $\F$-morphism is the composite of an $\F$-isomorphism followed by an inclusion.
\end{itemize}
 \end{definition}

As usual, we write $\Hom_\F(P, Q)=\mathrm{Mor}_\F(P, Q)$, 
$\mathrm{Iso}_\F(P, Q)=\Hom_\F(P, Q)$ if $|P|=|Q|$.
$\mathrm{Aut}_\F(P)=\Hom_\F(P, P)$, $\mathrm{Out}_\F(P)=\mathrm{Aut}_\F(P)/\Inn(P)$.
We say that two subgroups $P, Q\leq S$ are $\F$-conjugate if they are isomorphic as objects of the category $\F$. Let $P^\F$ denote the set of all subgroups of $S$ which are $\F$-conjugate to $P$.

\subsection{Saturated fusion systems}

Let $\F$ be a fusion system over $S$ and $Q\leq S$.

$\bullet$ $Q$ is fully $\F$-automised if $\mathrm{Aut}_S(Q)\in \mathrm{Syl}_p(\mathrm{Aut}_\F(Q))$.

$\bullet$ $Q$ is $\F$-receptive if it has the following property: for each $P\in Q^\F$ and each $\varphi\in \mathrm{Iso}_\F(P, Q)$, there is $\tilde{\varphi}\in \Hom_\F(N_\varphi, S)$ such that $\tilde{\varphi}|_P= \varphi$, where  $N_\varphi=\{g\in N_S(P)\mid  {^\varphi c_g}\in \mathrm{Aut}_S(Q)\}$.

The following version of the definition of saturated fusion systems is due to Roberts-Shpectorov, see \cite[I.2.2]{AKO}.
This definition is equivalent to that given by Broto-Levi-Oliver \cite[1.2]{BLO2}, 
and Puig's Frobenius category \cite[2.9]{Puig:2006a}.

\begin{definition}\label{sfus}
Let $\F$ be a fusion system over a finite $p$-group $S$. The fusion system $\F $ is said to be saturated if each subgroup of $S$ is $\F$-conjugate to a subgroup which is fully $\F$-automised and $\F$-receptive.
\end{definition}

\subsection{Alperin's fusion theorem for saturated fusion systems }

The Alperin fusion theorem for fusion systems plays a very important role in the study of fusion systems.
Let $\F$ be a fusion system over $S$ and $Q\leq S$.

$\bullet$ $Q$ is fully $\F$-normalised, if $|N_{S}(Q)|\geq |N_{S}(P)|$ for all $P\in Q^{\F}$.

$\bullet$ $Q$ is $\F$-centric, if $C_{S}(P)=Z(P)$ for all $P\in Q^{\F}$.

$\bullet$ $Q$ is $\F$-radical, if $O_{p}(\Out_{\F}(Q))=1$.

$\bullet$ $Q$ is $\F$-essential, if $Q$ is fully $\F$-normalised, $\F$-centric, and $\Out_\F(Q)$ has a strongly $p$-embedded subgroup.

As usual, we write $\F^f$, $\F^c$, $\F^r$  and $\F^e$ the set of fully $\F$-normalised, $\F$-centric, $\F$-radical, and $\F$-essential subgroups, respectively. Write $\F^{fcr}$ the set of
subgroups which is fully $\F$-normalised, $\F$-centric and $\F$-radical.

\begin{theorem}\label{fusthm}
Let $\F$ be a saturated fusion system over a finite $p$-group $S$. Then
$\F=\langle\Aut_\F(P)\mid P \mbox{ is } \F\mbox{-essential or } P=S \rangle.$
\end{theorem}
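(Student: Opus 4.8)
The plan is to show $\F=\F_0$, where $\F_0$ denotes the subsystem $\langle\Aut_\F(P)\mid P\text{ is }\F\text{-essential or }P=S\rangle$ — the smallest fusion system over $S$ containing all these automorphism groups (it exists because $\F$ is one such, and being built from groups of automorphisms it is closed under composition, restriction and inversion of isomorphisms, and contains $\F_S(S)$ and all inclusions). The inclusion $\F_0\subseteq\F$ is obvious, so by part~(2) of Definition~\ref{fus} (every $\F$-morphism is an $\F$-isomorphism followed by an inclusion) it suffices to show that every $\F$-isomorphism lies in $\F_0$. I would prove this by downward induction on the order $n$ of the domain: the base case $n=|S|$ is just $\Aut_\F(S)\subseteq\F_0$, so fix $n<|S|$ and assume every $\F$-isomorphism whose domain has order $>n$ already lies in $\F_0$.

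First I would reduce to automorphism groups of fully normalised subgroups. Any $\F$-isomorphism $\varphi\colon A\to B$ with $|A|=n$ factors as $\rho^{-1}\sigma$ with $\sigma\colon A\to P$, $\rho\colon B\to P$ in $\F$, where $P$ is chosen fully $\F$-normalised and $\F$-conjugate to $A$; since $\sigma^{-1}$ and $\rho^{-1}$ have domain $P$, it suffices to treat $\F$-isomorphisms $\varphi\colon P\to B$ with $P$ fully normalised. Such $P$ is then fully $\F$-automised and $\F$-receptive (standard consequences of saturation, \cite{AKO}). Because $\Aut_S(P)\in\Syl_p(\Aut_\F(P))$, the $p$-subgroup $\varphi^{-1}\Aut_S(B)\varphi$ of $\Aut_\F(P)$ is conjugate into $\Aut_S(P)$ by some $\chi\in\Aut_\F(P)$; replacing $\varphi$ by $\varphi\chi^{-1}$ we may assume $\varphi^{-1}\Aut_S(B)\varphi\le\Aut_S(P)$, which forces $N_{\varphi^{-1}}=N_S(B)$. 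As $B<S$, proper subgroups of the $p$-group $S$ are not self-normalising, so $N_S(B)>B$; $\F$-receptivity of $P$ then extends $\varphi^{-1}\colon B\to P$ to an $\F$-morphism on $N_S(B)$, whose $\F$-isomorphism part has domain of order $>n$ and so belongs to $\F_0$ by induction. Restricting and inverting gives $\varphi\chi^{-1}\in\F_0$, so $\varphi\in\F_0$ provided $\chi$ is. The theorem is thereby reduced to the single claim that $\Aut_\F(P)\subseteq\F_0$ for every fully $\F$-normalised $P<S$.

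I would establish this by splitting into cases according to whether $P$ is $\F$-centric and $\F$-radical. If $P$ is not $\F$-centric, then (by the standard dictionary for fully normalised subgroups, \cite{AKO}) $C_S(P)\not\le P$, and $PC_S(P)$ lies in $N_\alpha$ for every $\alpha\in\Aut_\F(P)$; if $P$ is $\F$-centric but not $\F$-radical, the preimage $\widetilde O\le N_S(P)$ of the nontrivial normal $p$-subgroup $O_p(\Out_\F(P))$ strictly contains $P$, and $\Aut_{\widetilde O}(P)$ is normalised by every $\alpha\in\Aut_\F(P)$, so again $N_\alpha$ contains a subgroup strictly larger than $P$. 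In both cases $\F$-receptivity extends each $\alpha$ to that larger subgroup, whose automorphism part lies in $\F_0$ by induction, so $\Aut_\F(P)\subseteq\F_0$. Finally, if $P$ is $\F$-centric and $\F$-radical, then either $\Out_\F(P)$ has a strongly $p$-embedded subgroup — so $P$ is $\F$-essential and $\Aut_\F(P)\subseteq\F_0$ by construction — or it does not, and then the group-theoretic fact below gives $\Aut_\F(P)=\langle N_{\Aut_\F(P)}(\widehat U)\mid\Inn(P)<\widehat U\le\Aut_S(P)\rangle$, where for each such $\widehat U$ the preimage of $\widehat U$ in $N_S(P)$ strictly contains $P$, so, as in the previous cases, every element of $N_{\Aut_\F(P)}(\widehat U)$ extends to a strictly larger subgroup and lies in $\F_0$. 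This completes the induction.

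The group-theoretic input just invoked — a finite group has no strongly $p$-embedded subgroup if and only if it is generated by the normalisers of the nontrivial subgroups contained in a fixed Sylow $p$-subgroup — is the one genuinely nontrivial ingredient, and the step I expect to have to phrase and reference most carefully; it is precisely the mechanism that forces $\F$-essential subgroups into the list of generators of $\F_0$. Everything else is a disciplined use of the saturation axioms (full automisation to move $\Aut_S(P)$ into Sylow position, $\F$-receptivity to extend morphisms to larger subgroups, and the non-self-normalising property of proper subgroups of $p$-groups) together with the standard equivalences for fully normalised, fully automised and $\F$-receptive subgroups, which I would cite from \cite{AKO} rather than redevelop.
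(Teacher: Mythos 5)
The paper does not prove this statement: it is quoted as the known Alperin--Puig fusion theorem (cf.\ \cite[Theorem I.3.6]{AKO}), so there is no internal proof to compare against. Your argument is correct and is essentially the standard proof of that theorem: downward induction on the order of the domain, reduction to $\Aut_\F(P)$ for $P$ fully normalised via full automisation (Sylow conjugation of $\Aut_S(B)$ into $\Aut_S(P)$) and receptivity, then the case analysis (non-centric, centric non-radical, essential, and the remaining case handled by the fact that a group without a strongly $p$-embedded subgroup is generated by normalisers of nontrivial subgroups of a fixed Sylow $p$-subgroup). The only point to phrase carefully is the one you flag: that generation lemma requires the Sylow subgroup in question to be nontrivial, which holds here because $P<S$ is fully normalised and $\F$-centric, so $\Out_S(P)\cong N_S(P)/P\neq 1$; with that remark included, the proof is complete.
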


\section{centric-radical chains}

\begin{lemma}\label{equivalent}
Let $\F$ be a saturated fusion system over a finite $p$-group $S$.
If $Q\leq S$ is fully $\F$-normalised, $\F$-centric and $\F$-radical, 
then $O_p(N_\F(Q))=Q$.
\end{lemma}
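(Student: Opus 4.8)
The plan is to establish the two inclusions separately. For the inclusion $Q\le O_p(N_\F(Q))$, I would first recall that since $Q$ is fully $\F$-normalised and $\F$ is saturated, $N_\F(Q)$ is a saturated fusion system over $N_S(Q)$, so that its largest normal subgroup $O_p(N_\F(Q))$ is well behaved; then observe that $Q$ is normal in $N_\F(Q)$ essentially by construction, since $Q\trianglelefteq N_S(Q)$ and, by the very definition of the normaliser system, every morphism of $N_\F(Q)$ extends to an $\F$-morphism carrying $Q$ to $Q$. Hence $Q$ lies in $O_p(N_\F(Q))$.

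For the reverse inclusion, set $R:=O_p(N_\F(Q))$, so that $Q\le R\le N_S(Q)$ and $R$ is normal in $N_\F(Q)$. The first step is to control the conjugation map $R\to\Aut(Q)$: its kernel $C_R(Q)$ is contained in $C_S(Q)$, which equals $Z(Q)$ because $Q$ is $\F$-centric; as $Z(Q)\le Q\le R$, this gives $C_R(Q)=Z(Q)$ and $\Aut_R(Q)\cong R/Z(Q)$. The second step is to show $\Aut_R(Q)$ is normalised by $\Aut_\F(Q)=\Aut_{N_\F(Q)}(Q)$. Here I would use that $R$ is normal in $N_\F(Q)$ with $Q\le R$: any $\psi\in\Aut_\F(Q)$ extends to some $\bar\psi\in\Aut_{N_\F(Q)}(R)$ with $\bar\psi|_Q=\psi$, whence $\bar\psi(Q)=Q$, and then for $r\in R$ one computes inside $\Aut(Q)$ that $\psi\,(c_r|_Q)\,\psi^{-1}=c_{\bar\psi(r)}|_Q\in\Aut_R(Q)$. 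Consequently $\Aut_R(Q)/\Inn(Q)$ is a normal subgroup of $\Out_\F(Q)$, and it is a $p$-group, being a subquotient of the $p$-group $\Aut_S(Q)$. Since $Q$ is $\F$-radical, $O_p(\Out_\F(Q))=1$, which forces $\Aut_R(Q)=\Inn(Q)$. Combining this with $C_R(Q)=Z(Q)$ yields $R/Z(Q)\cong\Aut_R(Q)=\Inn(Q)\cong Q/Z(Q)$, hence $|R|=|Q|$ and, since $Q\le R$, $R=Q$.

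I expect the middle step to be the main obstacle: recognising that the defining ``extension fixing $R$'' property of $O_p(N_\F(Q))$, together with the identification $\Aut_{N_\F(Q)}(Q)=\Aut_\F(Q)$, can be used to push the $\Aut_\F(Q)$-action through so that $\Aut_R(Q)/\Inn(Q)$ becomes a normal $p$-subgroup of $\Out_\F(Q)$. Once that is in place, $\F$-radicality closes the argument immediately, while $\F$-centricity is exactly what pins down the kernel of the action on $Q$. A small technical point worth spelling out is why $N_\F(Q)$ is saturated, which is precisely where the hypothesis that $Q$ is fully $\F$-normalised is used.
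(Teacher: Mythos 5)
Your argument is correct and essentially coincides with the paper's proof: both set $R=O_p(N_\F(Q))$, extend each $\psi\in\Aut_\F(Q)=\Aut_{N_\F(Q)}(Q)$ to $R$ using normality of $R$ in $N_\F(Q)$ to see that $\Aut_R(Q)$ gives a normal $p$-subgroup of $\Out_\F(Q)$, and then use $\F$-radicality plus $\F$-centricity to force $R=Q$. The only (inessential) difference is at the very end, where the paper deduces $R\leq QC_S(Q)=Q$ directly from $\Aut_R(Q)\leq\Inn(Q)$, while you conclude via the order count $R/Z(Q)\cong\Aut_R(Q)=\Inn(Q)\cong Q/Z(Q)$.
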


\begin{proof}
Assume that $Q$ is fully normalised in $\F$ which  is $\F$-centric and $\F$-radical.
Let $O_p(N_\F(Q))=R$.
Then each $\varphi\in \Aut_\F(Q)\subseteq N_\F(Q)$ can be extended to
$\tilde{\varphi}\in\Aut_\F(R)$.
Thus $\Aut_R(Q)\lhd\Aut_\F(Q)$.
Since $Q$ is $\F$-radical and $\F$-centric, 
we see that $R\leq QC_S(Q)=Q$.
Hence $O_p(N_\F(Q))=Q$.
\end{proof}

By Lemma \ref{equivalent}, a normal subgroup of $S$ which is $\F$-centric and $\F$-radical is an $\mathfrak{m}$-radical subgroup in the sense  of \cite[Definition 3.1]{WZ}.

\begin{definition}
A subgroup $Q\leq S$ is said to be weakly centric-radical in $\F$ if $O_p(N_\F(Q))=Q$.
\end{definition}


Recall that a subgroup $Q\leq G$ of a group $G$ is $p$-radical in $G$ if $O_p(N_G(Q))=Q$.
In fact, from the viewpoint of category, 
this condition can be represented in the transporter category, 
so that the definition is analogous to the definition of $\F$-radical subgroup.
More precisely, let $G$ be a finite group and $S\in\Syl_p(G)$.
The transporter category $\T=\T_S(G)$ over $S$ is a category whose objects are the subgroups of $S$ with the set $\mathrm{Mor}_\T(P, Q)=\{g\in G\mid{^g}P\leq Q\}$ of morphisms from $P$ to $Q$.
Then $\Aut_\T(Q)=\mathrm{Mor}_\T(Q, Q)=N_G(Q)$ and $\Inn_\T(Q):=\mathrm{Mor}_Q(Q, Q)=Q$.
Thus a $p$-subgroup $Q$ is $p$-radical if $O_p(\Aut_\T(Q))=\Inn_\T(Q)$, 
i.e. $O_p(\Out_\T(Q))=O_p(\Aut_\T(Q)/\Inn_\T(Q))=1$.
This is analogous to the definition of $\F$-radical subgroup satisfying
$O_p(\Aut_\F(Q))=\Inn(Q)$.

Given a chain of subgroups $\sigma\colon Q_0<Q_1< \cdots<Q_n$ of $S$, 
define the length $|\sigma|=n$, 
the final subgroup $V^{\sigma}=Q_{n}$, 
the initial subgroup $V_\sigma=Q_0$, 
the $i$-th initial subchain $\sigma_i\colon Q_0<Q_1< \cdots<Q_i$.

Let  $G$ be a finite group with a Sylow $p$-subgroup $S$.
The stabiliser $G_\sigma=\mathrm{Stab}_G(\sigma)$ of $\sigma$ in $G$ is defined by 
\[G_\sigma=\bigcap_{i=0}^nN_G(Q_i)=\bigcap_{i=0}^n\Aut_\T(Q_i).\]
Let $\F$ be a saturated fusion system over a finite $p$-group $S$.
Similarly, the stabiliser $\mathrm{Stab}_\F(\sigma)$ of $\sigma$ in $\F$ is by definition $\cap_{i=0}^n\Aut_\F(Q_i)$.
We use the same notation $\mathrm{Stab}_\F(\sigma)$ to denote the subsystem of $\F$
over  $N_S(\sigma)=\bigcap_{i=0}^n N_S(Q_i)$ generated by the $\F$-morphisms 
which restricted to $Q_{i}$ is in $\Aut_\F(Q_i)$ for each $0\leq i\leq n$.
Then $\mathrm{Stab}_\F(\sigma)$ is equal to the normaliser $N_\F(\sigma)$ of $\sigma$ in $\F$ as defined in \cite[Section 4]{WZ} recursively by $N_\F(\sigma_i)=N_{N_\F(\sigma_{i-1})}(Q_i)$.
Note that $N_\F(\sigma)$ defined in \cite[Section 4]{WZ} is a fusion system over $N_S(\sigma)$ such that the morphism set $\Hom_{N_\F(\sigma)}(U, V)$ is
\[\{\varphi \in \Hom_\F(U, V)\mid\exists \tilde{\varphi}\in \Hom_\F(UQ_n, VQ_n), \tilde{\varphi}|_U=\varphi, \tilde{\varphi}(Q_i)=Q_i, 0\leq i\leq n\}\]
for $U, V\leq N_S(\sigma)$.

\begin{definition}
Let $\F$ be a saturated fusion system over a finite $p$-group $S$.
A chain of subgroups $\sigma\colon Q_0<Q_1< \cdots<Q_n$ is said to be a weakly centric-radical chain in $\F$, 
if $Q_i$ is weakly centric-radical in ${N_\F(\sigma_{i-1})}$ for $1\leq i\leq n$, 
$N_{\F}(Q_{0})=\F$ and $O_{p}(\F)<Q_{1}$.
A chain of subgroups $\sigma\colon Q_0<Q_1< \cdots<Q_n$ is said to be a normalised centric-radical chain in $\F$, 
if $Q_i$ is fully $N_\F(\sigma_{i-1})$-normalised, $N_\F(\sigma_{i-1})$-centric and $N_\F(\sigma_{i-1})$-radical for $1\leq i\leq n$, $N_{\F}(Q_{0})=\F$ and $O_{p}(\F)<Q_{1}$.
\end{definition}

Note that a weakly centric-radical chain and a normalised centric-radical chain $\sigma\colon Q_0<Q_1< \cdots<Q_n$ in $\F$ is a normal chain, that is to say $Q_{i}\lhd Q_{n}$ for $0\leq i\leq n$.
The condition on the initial subgroup $Q_{0}$ allow us flexible to choose initial normal subgroup between $1$ and $O_{p}(\F)$ for specific purpose.

\begin{lemma}\label{inclusionparabolic}
Let $\F$ be a fusion system over a finite $p$-group $S$
and $R, T\leq S$.
If $N_\F(R)\leq N_\F(T)$, then $N_{RT}(R)\lhd N_\F(R)$.
\end{lemma}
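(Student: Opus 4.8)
The plan is to check that $P_0 := N_{RT}(R)$ satisfies the two defining conditions for a normal subgroup of the fusion system $N_\F(R)$: that $P_0$ is a subgroup of $N_S(R)$ normalised by $N_S(R)$, and that every morphism of $N_\F(R)$ extends, inside $N_\F(R)$, to one stabilising $P_0$ (strong closure of $P_0$ in $N_\F(R)$ then being automatic). First I would unwind the hypothesis. The inclusion $N_\F(R)\leq N_\F(T)$ of fusion systems entails $N_S(R)\leq N_S(T)$, so the subgroup $R\leq N_S(R)$ normalises $T$; hence $RT$ is a subgroup of $S$ and $P_0=N_{RT}(R)=RT\cap N_S(R)=R\,(T\cap N_S(R))$, a subgroup lying between $R$ and $N_S(R)$. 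The inclusion $R\leq P_0$ recorded here is exactly what will later let us recognise restrictions of $\F$-morphisms as morphisms of $N_\F(R)$.

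The technical core is a double-extension statement: for every $\varphi\in\Hom_{N_\F(R)}(U,V)$ with $U,V\leq N_S(R)$ there exists $\hat\varphi\in\Hom_\F(URT,VRT)$ with $\hat\varphi|_U=\varphi$, $\hat\varphi(R)=R$ and $\hat\varphi(T)=T$. To produce it I would apply the definition of $N_\F(R)$ to extend $\varphi$ to $\tilde\varphi\in\Hom_\F(UR,VR)$ with $\tilde\varphi(R)=R$; since $UR,VR\leq N_S(R)$ and $\tilde\varphi$ already fixes $R$, the map $\tilde\varphi$ is itself a morphism of $N_\F(R)$, hence of $N_\F(T)$ by hypothesis, and the definition of $N_\F(T)$ then extends it to $\hat\varphi\in\Hom_\F(URT,VRT)$ with $\hat\varphi(T)=T$, while $\hat\varphi(R)=\tilde\varphi(R)=R$ is inherited. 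From such a $\hat\varphi$ I would extract two facts: $\hat\varphi(RT)=\hat\varphi(R)\hat\varphi(T)=RT$; and, since $\hat\varphi$ is a homomorphism with $\hat\varphi(R)=R$, every $w$ in its domain with ${}^{w}R=R$ satisfies ${}^{\hat\varphi(w)}R=\hat\varphi({}^{w}R)=R$, so $\hat\varphi$ sends the elements of $URT$ lying in $N_S(R)$ back into $N_S(R)$. Since $P_0\leq RT\cap N_S(R)\leq URT$, combining the two gives $\hat\varphi(P_0)\subseteq RT\cap N_S(R)=P_0$, and then $\hat\varphi(P_0)=P_0$ by injectivity of $\hat\varphi$.

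The lemma follows in two steps. Applying the above to $\varphi=c_g$ for $g\in N_S(R)$ (which lies in $\Inn(N_S(R))\subseteq\Aut_{N_\F(R)}(N_S(R))$), the equality $\hat\varphi(P_0)=P_0$ becomes ${}^{g}P_0=P_0$; hence $N_S(R)$ normalises $P_0$, so for all $U,V\leq N_S(R)$ the products $UP_0$ and $VP_0$ are subgroups of $N_S(R)$, and they are contained in $URT$ and $VRT$ since $P_0\leq RT$. Now for an arbitrary $\varphi\in\Hom_{N_\F(R)}(U,V)$ take the associated $\hat\varphi$ and set $\bar\varphi:=\hat\varphi|_{UP_0}\colon UP_0\to VP_0$. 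Then $\bar\varphi|_U=\varphi$ and $\bar\varphi(P_0)=\hat\varphi(P_0)=P_0$; and because $R\leq P_0$ we have $(UP_0)R=UP_0$ and $\bar\varphi(R)=R$, so $\bar\varphi$ is its own extension fixing $R$ and therefore lies in $\Hom_{N_\F(R)}(UP_0,VP_0)$. Thus every morphism of $N_\F(R)$ extends within $N_\F(R)$ to one normalising $P_0$, which together with $P_0\lhd N_S(R)$ gives $N_{RT}(R)=P_0\lhd N_\F(R)$.

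I expect the one place demanding care to be purely a matter of bookkeeping: at each stage one must confirm that the products $UR$, $URT$, $UP_0$ (and their $V$-analogues) are honest subgroups — which is where $N_S(R)\leq N_S(T)$ and, afterwards, $P_0\lhd N_S(R)$ are used — and one must remember to re-read $\tilde\varphi$ as a morphism of $N_\F(R)$ before invoking $N_\F(R)\leq N_\F(T)$. Passing from $\hat\varphi(P_0)\subseteq P_0$ to equality is free, since morphisms in a fusion system are injective.
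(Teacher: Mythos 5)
Your proposal is correct and follows essentially the same route as the paper's proof: extend $\varphi$ inside $\F$ to $UR$ fixing $R$, observe this extension is a morphism of $N_\F(R)\leq N_\F(T)$ and extend again to $URT$ fixing $T$ (hence fixing both $R$ and $T$), then restrict to $U\,N_{RT}(R)$ to get an $N_\F(R)$-morphism normalising $N_{RT}(R)$. Your extra bookkeeping (that $RT$, $UR$, $URT$, $UP_0$ are subgroups, that $\hat\varphi(N_{RT}(R))=N_{RT}(R)$ via $\hat\varphi(RT)=RT$ and preservation of the normaliser condition, and that $N_S(R)$ normalises $N_{RT}(R)$) just makes explicit steps the paper leaves implicit.
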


\begin{proof}
By the definition of normaliser fusion system $N_\F(R)$, for $P, Q\leq N_S(R)$, 
\[\Hom_{N_\F(R)}(P, Q)=\{\varphi \in \Hom_\F(P, Q)\mid\exists \tilde{\varphi}\in \Hom_\F(PR, QR), \tilde{\varphi}|_P=\varphi, \tilde{\varphi}(R)=R\}.\]
If $\varphi \in \Hom_{N_\F(R)}(P, Q)$, then there exists $ \phi\in \Hom_\F(PR, QR)$, such that $ \phi|_P=\varphi$ and  $\phi(R)=R$.
Hence $ \phi \in \Hom_{N_\F(R)}(PR, QR)$.
As $N_\F(R)\leq N_\F(T)$, $ \phi\in \Hom_{N_\F(T)}(PR, QR)$.
So there exists $\tilde{\phi}\in  \Hom_\F(PRT, QRT)$, such that $\tilde{\phi}|_{PR}=\phi$ and $\tilde{\phi}(T)=T$.
Thus for each $\varphi \in \Hom_{N_\F(R)}(P, Q)$, 
then there exists $\tilde{\phi}\in  \Hom_\F(PRT, QRT)$, 
such that $\tilde{\phi}|_{P}=\varphi$, $\tilde{\phi}(R)=R$ and $\tilde{\phi}(T)=T$.
Let $\tilde{\varphi}=\tilde{\phi}|_{PN_{RT}(R)}$.
Then $\tilde{\varphi}\in \Hom_\F(PN_{RT}(R), QN_{RT}(R))$ and $\tilde{\varphi}(R)=R$ hence $\tilde{\varphi}\in \Hom_{N_\F(R)}(PN_{RT}(R), QN_{RT}(R))$.
Note that $\tilde{\varphi}|_P=\varphi$ and $\tilde{\varphi}(N_{RT}(R))=N_{RT}(R)$.
Therefore, $N_{RT}(R)\lhd N_\F(R)$.
\end{proof}

\begin{corollary}
Let $\F$ be a fusion system over a finite $p$-group $S$, 
and let $Q\leq S$.
If $O_p(N_\F(Q))=Q$, then $O_p(\F)\leq Q$.
\end{corollary}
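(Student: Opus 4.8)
The plan is to deduce the corollary directly from Lemma~\ref{inclusionparabolic}. Set $T=O_p(\F)$. Since $T$ is normal in $\F$ we have $N_\F(T)=\F$, and in particular $N_\F(Q)\leq N_\F(T)$, so the hypothesis of Lemma~\ref{inclusionparabolic} is satisfied with $R=Q$ and this choice of $T$. The lemma then yields $N_{QT}(Q)\lhd N_\F(Q)$.

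Next I would observe that $QT$ is a $p$-subgroup of $S$ (being a product of subgroups of the $p$-group $S$), so $N_{QT}(Q)$ is itself a $p$-group, and it is normal in the fusion system $N_\F(Q)$. Since $O_p(N_\F(Q))$ is the largest normal subgroup of $N_\F(Q)$, this forces $N_{QT}(Q)\leq O_p(N_\F(Q))=Q$; as $Q\leq N_{QT}(Q)$ always holds, we obtain $N_{QT}(Q)=Q$.

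Finally I would apply the elementary fact that a proper subgroup of a finite $p$-group is strictly contained in its normalizer: from $Q=N_{QT}(Q)$ together with $Q\leq QT$ we conclude $Q=QT$, that is, $T=O_p(\F)\leq Q$, as desired.

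The argument is essentially a one-line application of Lemma~\ref{inclusionparabolic} once the correct subgroup $T$ is chosen, so I do not anticipate a genuine obstacle. The only point deserving a word of care is the implication ``$N_{QT}(Q)\lhd N_\F(Q)\Rightarrow N_{QT}(Q)\leq O_p(N_\F(Q))$'': in full generality this rests on the fact that $O_p$ of a fusion system is the unique largest normal $p$-subgroup, which in turn rests on products of normal subgroups being normal. Here, however, one only needs the trivial direction, namely that every normal subgroup of $N_\F(Q)$ is by definition contained in $O_p(N_\F(Q))$, so the proof goes through for an arbitrary (not necessarily saturated) fusion system~$\F$. It is also worth noting that this corollary is exactly what makes the hypothesis $O_p(\F)<Q_1$ in the definition of (weakly) centric-radical chains the natural one: the containment $O_p(\F)\leq Q_1$ is automatic for any $Q_1$ that is weakly centric-radical in $\F$.
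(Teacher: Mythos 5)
Your proof is correct and follows essentially the same route as the paper's: both take $T=O_p(\F)$, note $N_\F(Q)\leq \F=N_\F(O_p(\F))$, apply Lemma~\ref{inclusionparabolic} to get $N_{O_p(\F)Q}(Q)\lhd N_\F(Q)$, and conclude $N_{O_p(\F)Q}(Q)=Q$, hence $O_p(\F)\leq Q$. Your version merely spells out the intermediate justifications (containment in $O_p(N_\F(Q))$ and the normalizer-growth argument in the $p$-group $QT$) that the paper leaves implicit.
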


\begin{proof}
Note that $N_\F(Q)\leq \F=N_\F(O_p(\F))$.
Then $N_{O_p(\F)Q}(Q)\lhd N_\F(Q)$ by Lemma \ref{inclusionparabolic}.
Since $O_p(N_\F(Q))=Q$, 
it follows that $N_{O_p(\F)Q}(Q)\leq Q$.
Hence $N_{O_p(\F)Q}(Q)=Q$ and $O_p(\F)\leq Q$.
\end{proof}

Let  $\mathfrak{P}=\{ (Q, N_\F(Q) )\mid Q\leq S\}$.
We define a partial order on $\mathfrak{P}$ by
$(R, N_\F(R) )\leq (Q, N_\F(Q) )$ if and only if $R\leq Q$ and $N_\F(R)\leq N_\F(Q)$.

\begin{lemma}
Let $\F$ be a fusion system over a finite $p$-group $S$ and $R\leq S$.
Then $O_p(N_\F(R))=R$  if and only if $(R, N_\F(R))$ is maximal in $\mathfrak{P}$.
\end{lemma}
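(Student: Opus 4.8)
The plan is to prove the two implications separately, using Lemma~\ref{inclusionparabolic} for the forward direction and the defining extension property of $O_p$ of a fusion system for the converse. In both cases the point is to translate ``maximality in $\mathfrak{P}$'' into the group-theoretic statement $N_Q(R)=R$ for the relevant $Q$.

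Suppose first that $O_p(N_\F(R))=R$. To see that $(R,N_\F(R))$ is maximal I would take an arbitrary $(Q,N_\F(Q))\in\mathfrak{P}$ with $(R,N_\F(R))\le(Q,N_\F(Q))$, so that $R\le Q$ and $N_\F(R)\le N_\F(Q)$, and show $R=Q$; this forces equality of the two pairs since the second coordinate is determined by the first. Applying Lemma~\ref{inclusionparabolic} with $T=Q$ gives $N_{RQ}(R)\lhd N_\F(R)$, and since $R\le Q$ we have $RQ=Q$, hence $N_Q(R)\lhd N_\F(R)$. A normal $p$-subgroup of $N_\F(R)$ is contained in $O_p(N_\F(R))=R$, so $N_Q(R)\le R$, whence $N_Q(R)=R$; as $R$ and $Q$ are $p$-groups with $R\le Q$, the fact that a proper subgroup of a $p$-group is properly contained in its normaliser forces $R=Q$.

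Conversely, suppose $(R,N_\F(R))$ is maximal in $\mathfrak{P}$, and set $Q:=O_p(N_\F(R))$. Since $R\lhd N_\F(R)$ one gets $R\le Q$ for free, so the real content is the inclusion $N_\F(R)\le N_\F(Q)$; once that is established, $(R,N_\F(R))\le(Q,N_\F(Q))$ and maximality yields $R=Q=O_p(N_\F(R))$. For the inclusion I would argue as follows: $Q$ is normal in the fusion system $N_\F(R)$, which is defined over $N_S(R)$, so first $Q\lhd N_S(R)$ and hence $N_S(R)\le N_S(Q)$; second, by the definition of a normal subgroup of a fusion system, every morphism $\varphi$ of $N_\F(R)$ extends to a morphism $\bar\varphi$ of $N_\F(R)$ (in particular of $\F$) between the subgroups enlarged by $Q$, with $\bar\varphi(Q)=Q$. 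Comparing this with the defining condition on the morphisms of a normaliser fusion system recalled before Lemma~\ref{inclusionparabolic}, this says precisely that $\varphi$ is a morphism of $N_\F(Q)$; as this holds for every morphism of $N_\F(R)$ and $N_S(R)\le N_S(Q)$, we conclude $N_\F(R)\le N_\F(Q)$.

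The step I expect to require the most care is the extension property invoked in the converse: since $R$ need not be fully $\F$-normalised, $N_\F(R)$ need not be saturated, so one should be explicit that the property used is part of the definition of $O_p$ of an arbitrary fusion system (the largest subgroup that is normal, i.e.\ over which all morphisms extend fixing it), and not a consequence of saturation. A minor bookkeeping point in the forward direction is recording that $RQ=Q$ when $R\le Q$, so that Lemma~\ref{inclusionparabolic} delivers the normality of $N_Q(R)$ itself rather than of a possibly larger subgroup.
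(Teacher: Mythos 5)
Your proof is correct and follows essentially the same route as the paper: one direction applies Lemma~\ref{inclusionparabolic} together with $N_{QR}(R)\le O_p(N_\F(R))=R$ and the normaliser-growth property of $p$-groups, and the other uses that $Q=O_p(N_\F(R))$ is normal in $N_\F(R)$ to produce the comparison pair $(R,N_\F(R))\le(Q,N_\F(Q))$ and invoke maximality. You merely make explicit a few steps the paper leaves implicit (e.g.\ $RQ=Q$ and the unfolding of $N_\F(R)=N_{N_\F(R)}(Q)\le N_\F(Q)$), which is fine.
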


\begin{proof}
We first assume that $(R, N_\F(R))$ is maximal in $\mathfrak{P}$.
Let $O_p(N_\F(R))=Q$.
Then $R\leq Q$ and $N_\F(R)=N_{N_\F(R)}(Q)\leq N_\F(Q)$.
Hence $(R, N_\F(R) )\leq (Q, N_\F(Q) )$.
By the maximality of $(R, N_\F(R))$, we have $Q=R$.
Hence $O_p(N_\F(R))=R$.

Next we assume that $O_p(N_\F(R))=R$.
If there exists $(Q, N_\F(Q) )$ such that $(R, N_\F(R) )\leq (Q, N_\F(Q) )$, 
then $N_\F(R) \leq  N_\F(Q)$.
By Lemma  \ref{inclusionparabolic}, $N_{QR}(R)\lhd N_\F(R)$.
So $N_{QR}(R)\leq R$ and $Q=R$.
Hence $(R, N_\F(R))$ is maximal in $\mathfrak{P}$.
\end{proof}

Let $\F$ be a saturated fusion system over a finite $p$-group $S$.
Given a subgroup $Q\leq S$, 
for each fully normalised $\F$-conjugate $P$ of $Q$, 
there exists $\chi\in \Hom_\F(N_S(Q), N_S(P))$ such that $\chi(Q)=P$.
Assume that $\Hom_{^\chi N_\F(Q)}(\chi(U), \chi(V))=\chi\Hom_{N_\F(Q)}(U, V)\chi^{-1}$ for $U, V\leq N_S(Q)$.
Then ${^\chi N_\F(Q)}$ is a full subcategory of $N_\F(P)$ over ${^{\chi}N_S(Q)}$.
Thus ${^\chi(Q, N_\F(Q))}\leq (P, N_\F(P))$.

Write $\F^{f\mathfrak{cr}}=\{ Q\leq S\mid  Q\in\F^f, O_p(N_\F(Q))=Q\}=\F^{f}\cap \F^{\mathfrak{cr}}$.
If $Q\in \F^{f\mathfrak{cr}}$ then $(Q, N_\F(Q))$ is maximal in  $\mathfrak{P}$ up to $\F$-conjugation.
Obviously, $(O_p(\F), \F)$ and $(S, N_\F(S))$ are maximal in $\mathfrak{P}$.
Thus $O_p(\F)$ and $S$ are weakly centric-radical subgroups in $\F$.
So $O_p(\F)$ and $S$ are called trivial weakly centric-radical subgroups in $\F$, 
and an $\F^{f\mathfrak{cr}}$-subgroup $Q$ in $\F$ is called canonical if $O_p(\F)<Q<S$.

We call $N_\F(Q)$ a parabolic subsystem of $\F$ if $Q\in \F^{fcr}$ and $O_{p}(\F)<Q$. 
Then there is a bijection between the set $\F^{fcr}\setminus \{O_{p}(\F)\}$ and
the set of parabolic subsystems of $\F$ which maps $Q$ to $N_\F(Q)$.
Furthermore, if $\sigma\colon Q_0<Q_1< \cdots<Q_n$ is a normalised centric-radical chain in $\F$, 
then we have a chain $N_\F(Q_0)\supset N_\F(\sigma_1)\supset\cdots\supset N_\F(\sigma_n)$ such that $N_\F(\sigma_{i})$ is a parabolic subsystem of $N_\F(\sigma_{i-1})$ for $i\geq1$.
The chain $N_\F(Q_0)\supset N_\F(\sigma_1)\supset\cdots\supset N_\F(\sigma_n)$ associated with the normalised centric-radical chain $\sigma\colon Q_0<Q_1< \cdots<Q_n$ is called a parabolic chain of $\F$.

\begin{definition}\label{locrank}
Let $\F$ be a saturated fusion system over a finite $p$-group $S$.
We define the local rank $\rank(\F)$ of $\F$ recursively as follows:
\begin{itemize}
  \item[(1)] If $O_p(\F)=S$ then $\rank(\F)=0$;
  \item[(2)] If $O_p(\F)<S$, then $\rank(\F)=1+\max\{\rank(N_\F(R))\mid  R\in\F^{fcr}, O_p(\F)<R\}$.
\end{itemize}
\end{definition}

Then $\rank(\F)$ is the length of a longest parabolic chain in $\F$.
Equivalently, $\rank(\F)$ is the length of a longest normalised centric-radical chain in $\F$.

Define the weakly local rank $\mathrm{rank}_{w}(\F)$ of $\F$ to be the length of a longest weakly centric-radical chain in $\F$.
This also can be defined recursively as follows:
\begin{itemize}
  \item[(1)] If $O_p(\F)=S$ then $\mathrm{rank}_{w}(\F)=0$;
  \item[(2)] If $O_p(\F)<S$, then $\mathrm{rank}_{w}(\F)=1+\max\{\mathrm{rank}_{w}(N_\F(R))\mid  R\in\F^{\mathfrak{cr}}, \mbox{i.e., } O_p(N_\F(R))=R, O_p(\F)<R\}$.
\end{itemize}

Note that a normalised centric-radical chain in $\F$ is a normalised weakly centric-radical chain in $\F$.
Thus $\rank(\F)\leq \mathrm{rank}_{w}(\F)$.
Assume that $\sigma\colon Q_0<Q_1< \cdots<Q_n$ is one of the longest normalised (weakly) centric-radical chain in $\F$. 
Then we have $O_p(N_{\F}(\sigma_{n}))=N_{S}(\sigma_{n})$.
Since $Q_n=O_p(N_{N_{\F}(\sigma_{n-1})}(Q_{n}))$ as $Q_{n}$ is fully normalised and (weakly) centric-radical in $N_{\F}(\sigma_{n-1})$, 
it follows that 
$Q_n=O_p(N_{N_{\F}(\sigma_{n-1})}(Q_{n}))=O_p(N_{\F}(\sigma_{n}))=N_{S}(\sigma_{n})$.
Note that $N_S(\sigma_{n})=N_{N_{S}(\sigma_{n-1})}(Q_{n})$, so we have $Q_{n}=N_{S}(\sigma_{n-1})=N_S(\sigma_{n})$.

\begin{example}
Let $G=S_{8}$ be the symmetric group of degree 8, let $S\in \Syl_{2}(G)$, and let $\F=\F_{S}(G)$.
Then $\mathrm{rank}(\F)=2$ under the definition by \cite[Definition 3.1]{WZ}, 
$\mathrm{rank}_{w}(\F)=\rank(\F)=3$.
\end{example}

\begin{lemma}\label{local induction}
Let $\F$ be a saturated fusion system over a finite $p$-group $S$ and $R\leq S$.
Assume that
$\sigma\colon Q_0<Q_1< \cdots<Q_n$ is a chain in $N_\F(R)$.
Then $O_p(N_{\F}(\sigma_i))\cap N_S(R)\leq O_p(N_{N_\F(R)}(\sigma_i))$.
\end{lemma}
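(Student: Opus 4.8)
We show that $K_{0}:=O_{p}(N_{\F}(\sigma_{i}))\cap N_{S}(R)$ is a normal subgroup of the fusion system $\mathcal{E}:=N_{N_{\F}(R)}(\sigma_{i})$, which is defined over $T:=N_{S}(R)\cap N_{S}(\sigma_{i})$; the desired inclusion $K_{0}\leq O_{p}(\mathcal{E})$ is then automatic. The main tool is the following elementary fact: \emph{if $\mathcal{G}$ is a fusion system over a finite $p$-group $W$ and $Q\leq W$, then $O_{p}(\mathcal{G})\cap N_{W}(Q)\leq O_{p}(N_{\mathcal{G}}(Q))$.} Indeed, put $P'=O_{p}(\mathcal{G})$; given a morphism $\psi\colon U\to V$ of $N_{\mathcal{G}}(Q)$ with a $\mathcal{G}$-witness $\hat\psi\in\Hom_{\mathcal{G}}(UQ,VQ)$ satisfying $\hat\psi(Q)=Q$, normality of $P'$ in $\mathcal{G}$ extends $\hat\psi$ to $\hat\psi^{+}\in\Hom_{\mathcal{G}}(UQP',VQP')$ with $\hat\psi^{+}(P')=P'$ and $\hat\psi^{+}(Q)=Q$; since $\hat\psi^{+}$ fixes both $Q$ and $P'$ it maps $P'\cap N_{W}(Q)$ onto itself, and restricting $\hat\psi^{+}$ to $U(P'\cap N_{W}(Q))$ exhibits $P'\cap N_{W}(Q)$ as normal in $N_{\mathcal{G}}(Q)$. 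Note that no saturation of $\mathcal{G}$ is needed here, only the definition of $O_{p}(\mathcal{G})\lhd\mathcal{G}$ and elementary group theory.

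Next I would unwind the definitions. Iterating the recursion $N_{N_{\F}(R)}(\sigma_{j})=N_{N_{N_{\F}(R)}(\sigma_{j-1})}(Q_{j})$ identifies $\mathcal{E}$ with the normaliser in $\F$ of the family $\{R,Q_{0},\dots,Q_{i}\}$: a map $\psi\colon U\to V$ with $U,V\leq T$ lies in $\mathcal{E}$ exactly when it is an $\F$-morphism admitting an extension $\hat\psi\in\Hom_{\F}(URQ_{i},VRQ_{i})$ with $\hat\psi(R)=R$ and $\hat\psi(Q_{j})=Q_{j}$ for all $j$. Restricting such a $\hat\psi$ to $UQ_{i}$ shows $\mathcal{E}\leq N_{\F}(\sigma_{i})$ as fusion systems over $T\leq N_{S}(\sigma_{i})$; moreover, since $T$ normalises both $O_{p}(N_{\F}(\sigma_{i}))\lhd N_{S}(\sigma_{i})$ and $N_{S}(R)$, we have $K_{0}\leq T$ and $K_{0}\lhd T$. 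When $R\leq N_{S}(\sigma_{i})$, i.e. $R$ normalises every $Q_{j}$, one has $\mathcal{E}=N_{N_{\F}(\sigma_{i})}(R)$, and the fact above applied to $\mathcal{G}=N_{\F}(\sigma_{i})$, $W=N_{S}(\sigma_{i})$, $Q=R$ gives $O_{p}(N_{\F}(\sigma_{i}))\cap N_{N_{S}(\sigma_{i})}(R)\leq O_{p}(\mathcal{E})$; as $O_{p}(N_{\F}(\sigma_{i}))\leq N_{S}(\sigma_{i})$, the left‑hand side equals $K_{0}$, which is exactly the claim.

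In general $R$ need not lie in $N_{S}(\sigma_{i})$, and then I would run the same mechanism by hand. Given $\psi$ and a witness $\hat\psi\in\Hom_{\F}(URQ_{i},VRQ_{i})$ fixing $R$ and the $Q_{j}$, restrict $\hat\psi$ to $A:=URQ_{i}\cap N_{S}(\sigma_{i})$; because $\hat\psi$ fixes each $Q_{j}$ this restriction is a morphism of $N_{\F}(\sigma_{i})$, so by normality of $P:=O_{p}(N_{\F}(\sigma_{i}))$ in $N_{\F}(\sigma_{i})$ it extends over $P$ to a morphism of $N_{\F}(\sigma_{i})$ fixing $P$. The crux is then to amalgamate this extension with $\hat\psi$ itself --- the two coincide on $A$ --- into a single $\F$-morphism $\hat\psi^{+}$ that fixes $R$, every $Q_{j}$, and $P$. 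Once $\hat\psi^{+}$ is available it maps $K_{0}=P\cap N_{S}(R)$ onto itself exactly as in the fact above, and $\hat\psi^{+}|_{UK_{0}}$ is the required extension of $\psi$ inside $\mathcal{E}$, whence $K_{0}\lhd\mathcal{E}$. This amalgamation is the step I expect to be genuinely delicate: it is where one has to force the normality of $P$ \emph{inside} $N_{\F}(\sigma_{i})$ to cooperate with the external constraint of fixing a subgroup $R$ that in general lies outside $N_{S}(\sigma_{i})$, and it appears to require a careful use of saturation of $\F$ (replacing $\hat\psi$ by a fully $\F$-normalised representative, invoking the extension axiom, and transporting back), together with the observation that everything used to extend already stabilises $N_{S}(\sigma_{i})$ and hence $P$.
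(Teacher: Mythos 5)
Your ``main tool'' is correct, and it is essentially the paper's Lemma~\ref{inclusionparabolic} specialised to $T=O_p(\mathcal{G})$ (with the harmless refinement that the intersection $O_p(\mathcal{G})\cap N_W(Q)$, rather than $N_{QO_p(\mathcal{G})}(Q)$, is shown to be normal). Your treatment of the case $R\leq N_S(\sigma_i)$ is then exactly the paper's whole proof: the paper asserts $N_{N_\F(R)}(\sigma_i)=N_{N_\F(\sigma_i)}(R)\leq N_\F(\sigma_i)=N_{N_\F(\sigma_i)}(O_p(N_\F(\sigma_i)))$ and applies Lemma~\ref{inclusionparabolic} inside $N_\F(\sigma_i)$ to get $O_p(N_\F(\sigma_i))\cap N_S(R)\lhd N_{N_\F(R)}(\sigma_i)$, hence the inclusion into $O_p(N_{N_\F(R)}(\sigma_i))$. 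So the completed portion of your argument coincides with the paper's in structure and substance.

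The step you leave open --- the ``amalgamation'' needed when $R\not\leq N_S(\sigma_i)$ --- is a genuine gap in your write-up viewed as a proof of the lemma as stated: the hypotheses only say that $\sigma$ is a chain in $N_\F(R)$, i.e.\ each $Q_j\leq N_S(R)$, and nothing forces $R$ to normalise the $Q_j$, so this case is not vacuous; moreover your sketch (restrict the witness $\hat\psi$ to $URQ_i\cap N_S(\sigma_i)$, extend over $P=O_p(N_\F(\sigma_i))$ by normality, then glue with $\hat\psi$) does not obviously go through, since the two maps live on groups whose join $URQ_iP$ need not even be a group when $URQ_i$ fails to normalise $P$, and no saturation argument is actually supplied. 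Be aware, though, that the paper's proof does not treat this case either: its opening identity $N_{N_\F(R)}(\sigma_i)=N_{N_\F(\sigma_i)}(R)$ only makes sense when $R$ is an object of $N_\F(\sigma_i)$, i.e.\ $R\leq N_S(\sigma_i)$, and is simply asserted. So relative to what the paper actually proves you have lost nothing; but a complete proof of the statement in the stated generality would require producing, for each morphism of $N_{N_\F(R)}(\sigma_i)$ with witness fixing $R$ and the $Q_j$, an $\F$-morphism on $UK_0RQ_i$ (where $K_0=O_p(N_\F(\sigma_i))\cap N_S(R)$) still fixing $R$ and the $Q_j$, mapping $K_0$ onto itself and restricting to the given morphism --- and that is precisely what neither you nor the paper establishes.
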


\begin{proof}
Since $N_{N_\F(R)}(\sigma_i)=N_{N_\F(\sigma_i)}(R)\leq N_\F(\sigma_i)=N_{N_\F(\sigma_i)}(O_p(N_{\F}(\sigma_i)))$, it follows that $O_p(N_{\F} (\sigma_i)) \cap N_{N_S(\sigma_i)}(R)\lhd N_{N_\F(\sigma_i)}(R)$ by Lemma \ref{inclusionparabolic}.
That is $O_p(N_{\F}(\sigma_i))\cap N_{S}(R)\lhd N_{N_\F(R)}(\sigma_i)$.
Hence $O_p(N_{\F}(\sigma_i))\cap N_S(R)\leq O_p(N_{N_\F(R)}(\sigma_i))$.
\end{proof}

Analogous to \cite[Proposition 3.1]{AE}, we can show that weakly local rank of a local subsystem of $\F$ is less than or equal to that of $\F$.

\begin{proposition}
Let $\F$ be a saturated fusion system over a finite $p$-group $S$.
Assume that
$\sigma\colon Q_0<Q_1< \cdots<Q_n$ is a weakly centric-radical chain in $N_\F(R)$, 
where $R$ is fully normalised in $\F$.
Then there exists $\sigma^*\colon P_0<P_1< \cdots<P_n$ which is a
weakly centric-radical chain in $\F$.
Consequently, $\mathrm{rank}_{w}(N_\F(R))\leq \mathrm{rank}_{w}(\F)$ for fully normalised subgroup $R$ in $\F$.
In particular, if $R\unlhd S$
then $\sigma\colon O_{p}(\F)<Q_1< \cdots<Q_n$ is also a weakly centric-radical chain in $\F$.
\end{proposition}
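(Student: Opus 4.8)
The plan is to rewrite the hypothesis on $\sigma$ as a statement internal to $\F$, settle the case $R\unlhd S$ by a direct computation, and then reduce the general case to it. First I would record the following translation. Since $R\unlhd N_\F(R)$ we have $R\le O_p(N_\F(R))$, and the defining inequality $O_p(N_\F(R))<Q_1$ of a weakly centric-radical chain then forces $R<Q_1$, hence $R<Q_i$ for all $i\ge 1$, so that $R<Q_1<\cdots<Q_n$ is an honest chain in $\F$. The normaliser of a chain is the subsystem generated by the $\F$-morphisms stabilising each of its terms, a description symmetric in the terms; combined with the condition $N_{N_\F(R)}(Q_0)=N_\F(R)$ it gives $N_{N_\F(R)}(\sigma_{i-1})=N_\F(R<Q_1<\cdots<Q_{i-1})$ for $1\le i\le n$. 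Thus the hypothesis that $Q_i$ be weakly centric-radical in $N_{N_\F(R)}(\sigma_{i-1})$ becomes exactly
\[
O_p\bigl(N_\F(R<Q_1<\cdots<Q_i)\bigr)=Q_i\qquad(1\le i\le n),
\]
together with $O_p(N_\F(R))<Q_1$.

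Next, write $\E_i=N_\F(Q_1<\cdots<Q_i)$, with $\E_0=\F$. By the same symmetry, $N_\F(R<Q_1<\cdots<Q_i)=N_{\E_i}(R)$, so applying Lemma \ref{inclusionparabolic} inside $\E_i$ to the subgroups $R$ and $O_p(\E_i)$ of $N_S(Q_1<\cdots<Q_i)$ — note $N_{\E_i}(R)\le\E_i=N_{\E_i}(O_p(\E_i))$ — yields $N_{R\,O_p(\E_i)}(R)\unlhd N_{\E_i}(R)$, whence $N_{R\,O_p(\E_i)}(R)\le O_p(N_{\E_i}(R))=Q_i$. Also $Q_i\unlhd\E_i$, being the top term of its defining chain, so $Q_i\le O_p(\E_i)$. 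If $R\unlhd S$ then $O_p(\E_i)\le S=N_S(R)$ normalises $R$, so $N_{R\,O_p(\E_i)}(R)=R\,O_p(\E_i)\supseteq O_p(\E_i)$ and therefore $O_p(\E_i)=Q_i$; likewise Lemma \ref{inclusionparabolic} applied to $N_\F(R)\le\F=N_\F(O_p(\F))$, using $R\unlhd S$ once more, gives $O_p(\F)=N_{R\,O_p(\F)}(R)\le O_p(N_\F(R))<Q_1$. As $O_p(\F)\unlhd\F$ may be deleted from every chain normaliser, $\sigma^*\colon O_p(\F)<Q_1<\cdots<Q_n$ has $N_\F(\sigma^*_0)=\F$, $O_p(\F)<Q_1$, and $O_p(N_{N_\F(\sigma^*_{i-1})}(Q_i))=O_p(\E_i)=Q_i$ for $1\le i\le n$, so it is a weakly centric-radical chain in $\F$. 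This proves the ``in particular'' assertion, and with it $\mathrm{rank}_w(N_\F(R))\le\mathrm{rank}_w(\F)$ when $R\unlhd S$.

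For a general fully $\F$-normalised $R$ the argument above only gives $R\le Q_i$ and $N_{O_p(\E_i)}(R)\le Q_i$; since $O_p(\E_i)$ need not normalise $R$, it can strictly contain $Q_i$, and $O_p(\F)<Q_1<\cdots<Q_n$ need not be weakly centric-radical in $\F$. I would then proceed by induction on $n$ as in \cite[Proposition 3.1]{AE}, the case $n=0$ being vacuous: for $n\ge 1$ pass to a fully $\F$-normalised $\F$-conjugate $P_1$ of $Q_1$, transport $R$ and $\sigma$ using saturation and the inclusion ${}^{\chi}(Q,N_\F(Q))\le(P,N_\F(P))$ recorded before Definition \ref{locrank}, observe that $Q_2<\cdots<Q_n$ then becomes a weakly centric-radical chain of length $n-1$ in $N_{N_\F(P_1)}(R^*)$ for the corresponding conjugate $R^*$ of $R$, apply the inductive hypothesis to the saturated fusion system $N_\F(P_1)$ (after a further conjugation making $R^*$ fully $N_\F(P_1)$-normalised) to obtain a weakly centric-radical chain of length $n-1$ in $N_\F(P_1)$, and finally prefix this chain suitably, checking the initial inequalities with the corollary following Lemma \ref{inclusionparabolic}, to get $\sigma^*$ of length $n$ in $\F$; the inequality $\mathrm{rank}_w(N_\F(R))\le\mathrm{rank}_w(\F)$ then follows by taking $\sigma$ of maximal length. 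I expect the prolongation to be the main obstacle: extending the chain built from $\sigma_{n-1}$ by one further term while preserving all the $O_p$-of-normaliser equalities requires a strengthened inductive statement controlling how that chain sits inside $\F$ relative to $\sigma$ — e.g.\ that its top term contains an $\F$-conjugate of $Q_1$ and that its normaliser dominates the transported $N_{N_\F(R)}(\sigma_1)$ — precisely because being weakly centric-radical is not inherited from a subsystem to the ambient fusion system.
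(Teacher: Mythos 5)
Your reduction of the hypotheses to the conditions $O_p\bigl(N_\F(R<Q_1<\cdots<Q_i)\bigr)=Q_i$ and your treatment of the case $R\unlhd S$ are sound (the paper obtains that case as the specialisation $N_S(R)=S$ of its general construction), but the general case — which is the actual content of the proposition, since it is what gives $\mathrm{rank}_w(N_\F(R))\leq\mathrm{rank}_w(\F)$ — is left as a sketch with an acknowledged hole, and the hole is essential. Your plan is to conjugate $Q_1$ to a fully $\F$-normalised conjugate $P_1$ and induct on the length inside $N_\F(P_1)$; but full normalisation of $P_1$ does not give $O_p(N_\F(P_1))=P_1$, because $Q_1$ is weakly centric-radical only in the subsystem $N_\F(R)$ and, as you yourself note, this property does not pass from a subsystem to the ambient system or to $\F$-conjugates. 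So the prefix term of your would-be chain in $\F$ need not satisfy the required condition, the prolongation step is exactly the unproved "strengthened inductive statement" you mention, and the induction does not close.

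The paper's proof avoids this by a different construction, with no conjugation and no induction on defect or on a smaller ambient system: it keeps the given chain and \emph{enlarges} each term. Writing $\E=N_\F(R)$, $T=N_S(R)$, one chooses $P_1$ so that $(P_1,N_\F(P_1))$ is maximal in the poset $\mathfrak P$ above $(Q_1,N_\F(Q_1))$; maximality (the unlabelled lemma after the corollary to Lemma \ref{inclusionparabolic}) makes $P_1$ weakly centric-radical in $\F$ by fiat, so no transfer of the property is needed. The two real points are then: (i) an extension argument showing $N_\E(Q_1)\leq N_\E(P_1)$, so that Lemma \ref{inclusionparabolic} applies inside $\E$ and gives $P_1\cap N_T(Q_1)\lhd N_\E(Q_1)$, whence $Q_1=P_1\cap T$ by $O_p(N_\E(Q_1))=Q_1$; this intersection formula yields $O_p(\F)<P_1$. (ii) Iterating: $P_{m+1}$ is chosen maximal in the poset of $N_\F(\sigma^*_m)$ above $Q_{m+1}$ with $P_mQ_{m+1}\leq P_{m+1}$, and the same two-step argument gives $N_\E(\sigma_{m+1})\leq N_\F(\sigma^*_{m+1})$ and $Q_{m+1}=P_{m+1}\cap N_T(\sigma_m)$, which forces $P_m<P_{m+1}$ (since $Q_m<Q_{m+1}$) and hence a chain $\sigma^*$ of the same length $n$ in $\F$. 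If you want to salvage your outline, the invariant you were missing is precisely this pair of statements carried along the induction — $N_\E(\sigma_i)\leq N_\F(\sigma^*_i)$ and $Q_i=P_i\cap N_T(\sigma_{i-1})$ — and with them the conjugation step becomes unnecessary.
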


\begin{proof}
Let $\E=N_\F(R)$ and $T=N_S(R)$.
Let $(P_1, N_\F(P_1))$ be maximal in $\mathfrak{P}$ such that  $(Q_1, N_\F(Q_1))\leq (P_1, N_\F(P_1))$.
Then $P_1\in\F^{\mathfrak{cr}}$ such that $N_\E(Q_1)\leq N_\F(Q_1)\leq N_\F(P_1)$.

We claim that $N_\E(Q_1)\leq N_\E(P_1)$.
In fact, let $U, V\leq N_T(Q_1)$, $\varphi\in \Hom_{N_\E(Q_1)}(U, V)$.
There is $\phi_1\in\Hom_\E(UQ_1, VQ_1)$ such that $\phi_1|_{U}=\varphi$
and $\phi_1(Q_1)=Q_1$.
Since $\E=N_\F(R)$, we have $\phi_2\in \Hom_\F(UQ_1R, VQ_1R)$  such that
$\phi_2|_{UQ_1}=\phi_1$ and $\phi_2(R)=R$.
Thus $\phi_2\in \Hom_{N_\F(R)}(UQ_1R, VQ_1R)=\Hom_\E(UQ_1R, VQ_1R)$, 
so $\phi_2\in \Hom_{N_\E(Q_1)}(UQ_1R, VQ_1R)$ as $\phi_2(Q_1)=Q_1$.
Since $N_\E(Q_1)\leq N_\F(P_1)$, 
there is $\tilde{\varphi}\in\Hom_\F(UQ_1RP_1, VQ_1RP_1)$ such that $\tilde{\varphi}|_{UQ_1R}=\phi_2$ and $\tilde{\varphi}(P_1)=P_1$.
We have $\tilde{\varphi}\in\Hom_{N_\E(P_1)}(UQ_1RP_1, VQ_1RP_1)$.
Thus $\varphi=\tilde{\varphi}|_{U}\in\Hom_{N_\E(P_1)}(U, V)$ and $N_\E(Q_1)\leq N_\E(P_1)$ as claimed.

Then by Lemma \ref{inclusionparabolic}, 
$P_1\cap N_T(Q_1)\lhd N_\E(Q_1)$.
Since $Q_1$ is a weakly centric-radical subgroup of $\E$, 
we have $O_p(N_\E(Q_1))=Q_1$.
Thus $P_1\cap N_T(Q_1)\leq Q_1$.
Hence
$Q_1=P_1\cap N_T(Q_1)=P_1\cap T$.
Since $O_{p}(\F)\cap T\leq O_{p}(\E)<Q_1=P_1\cap T$, 
we have $O_{p}(\F)<P_1$.

Assume that we have a weakly centric-radical chain $\sigma^*\colon P_0<P_1< \cdots<P_m$  in $\F$ for $1\leq m<n$, 
such that $N_\E(\sigma_i)\leq N_\F(\sigma^*_i)$  and $Q_i=P_i\cap N_T(\sigma_i)=P_i\cap N_T(\sigma_{i-1}) $ for all $1\leq i\leq m$.
Note that $Q_{m+1}\leq Q_n\leq N_T(\sigma_n)\leq N_T(\sigma_m)\leq N_S(\sigma^*_m)$ and
$(Q_{m+1}, N_{N_\F(\sigma^*_{m})}(Q_{m+1}))\leq (P_{m}Q_{m+1}, N_{N_\F(\sigma^*_{m})}(P_{m}Q_{m+1}))$.
Let $(P_{m+1}, N_{N_\F(\sigma^*_{m})}(P_{m+1}))$ be maximal in $\mathfrak{P}$ of $N_\F(\sigma^*_{m})$ such that  $(Q_{m+1}, N_{N_\F(\sigma^*_{m})}(Q_{m+1}))\leq (P_{m+1}, N_{N_\F(\sigma^*_{m})}(P_{m+1}))$ and $P_{m}Q_{m+1}\leq P_{m+1}$.
Then we have $P_{m+1}\in {N_\F(\sigma^*_m)}^{\mathfrak{cr}}$.
Since $N_\E(\sigma_{m+1})=N_{N_\E(\sigma_{m})}(Q_{m+1})\leq N_{N_\F(\sigma^*_{m})}(Q_{m+1})\leq N_{N_\F(\sigma^*_{m})}(P_{m+1})$, 
we see that 
$N_\E(\sigma_{m+1})\leq N_\F(\sigma^*_{m+1})$ and
$N_{N_\E(\sigma_{m})}(Q_{m+1})\leq N_{N_\E(\sigma_{m})}(P_{m+1})$.
Then by Lemma \ref{inclusionparabolic}, 
$P_{m+1}\cap N_{N_T(\sigma_{m})}(Q_{m+1})\lhd N_{N_\E(\sigma_{m})}(Q_{m+1})$.
Hence $Q_{m+1}=P_{m+1}\cap N_{N_T(\sigma_{m})}(Q_{m+1})=P_{m+1}\cap N_T(\sigma_{m})$.
Since $Q_{m}=P_m\cap N_T(\sigma_{m})<P_{m+1}\cap N_T(\sigma_{m})=Q_{m+1}$, 
we have $P_{m}< P_{m+1}$.
By induction, we prove that there exists a weakly centric-radical chain $\sigma^*\colon P_0<P_1< \cdots<P_n$ in $\F$.
Hence, $\mathrm{rank}_{w}(N_\F(R))\leq \mathrm{rank}_{w}(\F)$ for fully normalised subgroup $R$ in $\F$.

In particular, if $R\unlhd S$ then $T=N_{S}(R)=S$.
So $P_{i}=Q_{i}$ for $i=1, 2\dots, n$.
Thus $\sigma\colon O_{p}(\F)<Q_1< \cdots<Q_n$ is a weakly centric-radical chain in $\F$.
\end{proof}

\begin{proposition}
Assume $\F_{1}$ and $\F_{2}$ are saturated fusion systems over $p$-groups $S_{1}$ and $S_{2}$, respectively.
Then $\mathrm{rank}_{w}(\F_{1}\times \F_{2})=\mathrm{rank}_{w}(\F_{1})+\mathrm{rank}_{w}(\F_{2})$ and
$\rank(\F_{1}\times \F_{2})=\rank(\F_{1})+\rank(\F_{2})$.
\end{proposition}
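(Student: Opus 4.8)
I would prove both equalities simultaneously by induction on $|S_{1}|\cdot|S_{2}|$, with a secondary induction on $\bigl(|S_{1}|-|O_{p}(\F_{1})|\bigr)+\bigl(|S_{2}|-|O_{p}(\F_{2})|\bigr)$, the plan being to reduce everything to the structure of the product fusion system $\F_{1}\times\F_{2}$ over $S=S_{1}\times S_{2}$; write $\pi_{i}\colon S\to S_{i}$ for the projections.

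\emph{Two structural lemmas.} First I record that $O_{p}(\F_{1}\times\F_{2})=O_{p}(\F_{1})\times O_{p}(\F_{2})$: ``$\supseteq$'' holds because every $\F_{1}\times\F_{2}$-morphism is the restriction of some $\varphi_{1}\times\varphi_{2}$ with $\varphi_{i}\in\Hom_{\F_{i}}$, and such a product extends over $O_{p}(\F_{1})\times O_{p}(\F_{2})$; for ``$\subseteq$'', if $Q\lhd\F_{1}\times\F_{2}$ then testing against the morphisms $\varphi_{1}\times\mathrm{id}$ and $\mathrm{id}\times\varphi_{2}$ and projecting the resulting extensions shows $\pi_{i}(Q)\lhd\F_{i}$, so $Q\le\pi_{1}(Q)\times\pi_{2}(Q)\le O_{p}(\F_{1})\times O_{p}(\F_{2})$. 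Secondly, for $Q_{i}\le S_{i}$ one has $N_{\F_{1}\times\F_{2}}(Q_{1}\times Q_{2})=N_{\F_{1}}(Q_{1})\times N_{\F_{2}}(Q_{2})$ as subsystems over $N_{S_{1}}(Q_{1})\times N_{S_{2}}(Q_{2})$, a direct unwinding of the definitions of the product fusion system and of the normaliser subsystem (a morphism extending to fix $Q_{1}\times Q_{2}$ is exactly a product of morphisms fixing $Q_{1}$ and $Q_{2}$ respectively). Combining the two, together with the identities $C_{S}(Q_{1}\times Q_{2})=C_{S_{1}}(Q_{1})\times C_{S_{2}}(Q_{2})$, $Z(Q_{1}\times Q_{2})=Z(Q_{1})\times Z(Q_{2})$, $N_{S}(Q_{1}\times Q_{2})=N_{S_{1}}(Q_{1})\times N_{S_{2}}(Q_{2})$, $\Out_{\F_{1}\times\F_{2}}(Q_{1}\times Q_{2})=\Out_{\F_{1}}(Q_{1})\times\Out_{\F_{2}}(Q_{2})$, and the fact that the $\F_{1}\times\F_{2}$-conjugates of $Q_{1}\times Q_{2}$ are exactly the $Q_{1}'\times Q_{2}'$ with $Q_{i}'\in Q_{i}^{\F_{i}}$, one concludes that $Q_{1}\times Q_{2}$ is weakly centric-radical (resp.\ fully normalised, centric and radical) in $\F_{1}\times\F_{2}$ if and only if each $Q_{i}$ has the corresponding property in $\F_{i}$.

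\emph{The key lemma, which I expect to be the main obstacle.} Next I would show that the ``radical'' subgroups of the product split as products: if $Q\le S$ and $O_{p}(N_{\F_{1}\times\F_{2}}(Q))=Q$, then $Q=\pi_{1}(Q)\times\pi_{2}(Q)$, and hence (by the structural lemmas) each $\pi_{i}(Q)$ is weakly centric-radical in $\F_{i}$; by Lemma~\ref{equivalent} the same holds when $Q$ is fully normalised, centric and radical in $\F_{1}\times\F_{2}$. For the splitting, set $Q_{i}=\pi_{i}(Q)$, so $Q\le Q_{1}\times Q_{2}$. If $t=(t_{1},t_{2})$ normalises $Q$ then $t_{i}$ normalises $Q_{i}$, so $N_{S}(Q)\le N_{S}(Q_{1}\times Q_{2})$, and projecting the extensions witnessing membership in $N_{\F_{1}\times\F_{2}}(Q)$ -- which necessarily fix $Q_{1}$ and $Q_{2}$ -- shows $N_{\F_{1}\times\F_{2}}(Q)\le N_{\F_{1}\times\F_{2}}(Q_{1}\times Q_{2})$. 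Applying Lemma~\ref{inclusionparabolic} with $R=Q$ and $T=Q_{1}\times Q_{2}$ (so $RT=Q_{1}\times Q_{2}$) gives $N_{Q_{1}\times Q_{2}}(Q)\lhd N_{\F_{1}\times\F_{2}}(Q)$, whence $N_{Q_{1}\times Q_{2}}(Q)\le O_{p}(N_{\F_{1}\times\F_{2}}(Q))=Q$; as a proper subgroup of a finite $p$-group is never self-normalising, $Q=Q_{1}\times Q_{2}$. This bootstrap from Lemma~\ref{inclusionparabolic} plus a normaliser-growth argument is the one genuinely non-formal point; after it everything else is bookkeeping.

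\emph{Conclusion of the induction.} If $O_{p}(\F_{1})=S_{1}$ then $\mathrm{rank}_{w}(\F_{1})=\rank(\F_{1})=0$ and, using the structural lemmas, $\mathrm{rank}_{w}(\F_{1}\times\F_{2})=\mathrm{rank}_{w}(\F_{2})$ and $\rank(\F_{1}\times\F_{2})=\rank(\F_{2})$, so both identities hold; symmetrically if $O_{p}(\F_{2})=S_{2}$. Assume $O_{p}(\F_{i})<S_{i}$. By the key lemma and the structural lemmas, the weakly centric-radical subgroups $R$ of $\F_{1}\times\F_{2}$ with $O_{p}(\F_{1}\times\F_{2})<R$ are exactly the $A\times B$ with $A\in\F_{1}^{\mathfrak{cr}}$, $B\in\F_{2}^{\mathfrak{cr}}$ and $(A,B)\ne(O_{p}(\F_{1}),O_{p}(\F_{2}))$, and $N_{\F_{1}\times\F_{2}}(A\times B)=N_{\F_{1}}(A)\times N_{\F_{2}}(B)$, a pair of strictly smaller size in the induction, so the inductive hypothesis gives $\mathrm{rank}_{w}(N_{\F_{1}}(A)\times N_{\F_{2}}(B))=\mathrm{rank}_{w}(N_{\F_{1}}(A))+\mathrm{rank}_{w}(N_{\F_{2}}(B))$. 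Substituting this into the recursive formula for $\mathrm{rank}_{w}(\F_{1}\times\F_{2})$ reduces the identity to the elementary statement
\[1+\max_{(A,B)\ne(O_{p}(\F_{1}),O_{p}(\F_{2}))}\bigl(\mathrm{rank}_{w}(N_{\F_{1}}(A))+\mathrm{rank}_{w}(N_{\F_{2}}(B))\bigr)=\mathrm{rank}_{w}(\F_{1})+\mathrm{rank}_{w}(\F_{2}),\]
which holds since $O_{p}(\F_{i})\in\F_{i}^{\mathfrak{cr}}$ with $\mathrm{rank}_{w}(N_{\F_{i}}(O_{p}(\F_{i})))=\mathrm{rank}_{w}(\F_{i})$ (so $A=O_{p}(\F_{1})$ together with some $B>O_{p}(\F_{2})$ attaining $\mathrm{rank}_{w}(N_{\F_{2}}(B))=\mathrm{rank}_{w}(\F_{2})-1$ gives ``$\ge$'', while every admissible pair has one coordinate equal to $O_{p}(\F_{i})$ and the other strictly larger, giving ``$\le$''). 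The identity for $\rank$ is obtained by the identical induction, replacing the superscript $\mathfrak{cr}$ by $fcr$ everywhere, using Lemma~\ref{equivalent} for the ``fully normalised, centric and radical'' version of the key lemma and Definition~\ref{locrank} for the recursion. (The inequality ``$\ge$'' may also be seen directly by concatenating longest chains in $\F_{1}$ and $\F_{2}$ -- shifted to start at $O_{p}(\F_{1})$ and $O_{p}(\F_{2})$ -- and ``$\le$'' by splitting a longest chain in $\F_{1}\times\F_{2}$ into its two coordinate chains via the key lemma.)
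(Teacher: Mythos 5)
Your treatment of $\mathrm{rank}_{w}$ is essentially correct and is the same route as the paper's: the splitting $Q=\pi_{1}(Q)\times\pi_{2}(Q)$ for subgroups with $O_{p}(N_{\F_{1}\times\F_{2}}(Q))=Q$ via Lemma \ref{inclusionparabolic} is exactly the step the paper invokes, and your two structural lemmas are the facts it cites to Aschbacher and to Wang--Zhang; the only blemish is the remark that ``every admissible pair has one coordinate equal to $O_{p}(\F_{i})$'', which is false as stated (both coordinates may be strictly larger), though the inequality you need still holds because at least one coordinate is strictly larger than $O_{p}(\F_{i})$.

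The genuine gap is the final claim that the $\rank$ identity follows ``by the identical induction with $\mathfrak{cr}$ replaced by $fcr$''. Your ``$\ge$'' step rests on $O_{p}(\F_{1})\in\F_{1}^{\mathfrak{cr}}$ and $\mathrm{rank}_{w}(N_{\F_{1}}(O_{p}(\F_{1})))=\mathrm{rank}_{w}(\F_{1})$; the $fcr$ analogue fails, since $O_{p}(\F_{i})$ need not be $\F_{i}$-centric (it never is when $O_{p}(\F_{i})=1\neq S_{i}$), so there may be no pair $(A,B)$ with $A\in\F_{1}^{fcr}$, $B\in\F_{2}^{fcr}$ attaining $\rank(N_{\F_{1}}(A))+\rank(N_{\F_{2}}(B))=\rank(\F_{1})+\rank(\F_{2})-1$. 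The concatenation argument in your closing parenthesis breaks for the same reason: the padded terms $A_{j}\times O_{p}(\F_{2})$ are not centric in the product. Moreover the gap is not repairable: take $\F_{1}=\F_{2}=\F_{D_{8}}(\mathrm{PSL}_{2}(7))$ at $p=2$, so that $O_{2}(\F_{i})=1$, $\F_{i}^{fcr}$ consists of $S_{i}\cong D_{8}$ and the two Klein four subgroups $V,V'$, and $\rank(\F_{i})=2$ (witnessed by $1<V<D_{8}$, since $N_{\F_{i}}(V)=\F_{D_{8}}(S_{4})$ has local rank $1$). By your own key lemma every $fcr$ subgroup of $\F_{1}\times\F_{2}$ is a product of $fcr$ subgroups, hence has index at most $4$ in $S_{1}\times S_{2}$, so no normalised centric-radical chain in the product has length greater than $3$; equivalently, the recursion of Definition \ref{locrank} gives $\rank(\F_{1}\times\F_{2})=1+\max\{\rank(N_{\F_{1}}(A))+\rank(N_{\F_{2}}(B))\}=1+2=3<4=\rank(\F_{1})+\rank(\F_{2})$. (By contrast $\mathrm{rank}_{w}(\F_{1}\times\F_{2})=4$, witnessed by $1<V\times1<D_{8}\times1<D_{8}\times V<D_{8}\times D_{8}$, whose intermediate terms are weakly centric-radical but not centric.) So once your bookkeeping is carried out honestly, it proves the $\mathrm{rank}_{w}$ statement but shows the $\rank$ statement cannot be obtained this way and in fact fails in general --- a point that the paper's own one-line conclusion from the same splitting facts also glosses over.
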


\begin{proof}
Let $S=S_{1}\times S_{2}$ and $\F=\F_{1}\times\F_{2}$.
For each subgroup $P\leq S$, 
let $P_{i}\leq S_{i}$ be the image of $P$ under projection onto $S_{i}$.
By \cite[2.5]{Asch}, $N_{\F}(P)\leq N_{\F}(P_{1}P_{2})=N_{\F_{1}}(P_{1})\times N_{\F_{2}}(P_{2})$.
If $O_{p}(N_{\F}(P))=P$, then $P=P_{1}\times P_{2}$ by Lemma \ref{inclusionparabolic}.
By \cite[Lemma 5.4]{WZ}, $O_{p}(N_{\F_{i}}(P_{i}))=P_{i}$.
Assume that $P\in\F^{f}$ and $\varphi_{i}\in \Hom_{\F_{i}}(N_{S_{i}}(P_{i}), N_{S_{i}}(Q_{i}))$ such that $Q_{i}\in\F_{i}^{f}$.
Then $\varphi=\varphi_{1}\times \varphi_{2}\in \Iso_{\F}(N_{S}(P), N_{S}(Q))$
as $N_{S}(P)=N_{S_{1}}(P_{1})\times N_{S_{2}}(P_{2})$ and $N_{S}(Q)=N_{S_{1}}(Q_{1})\times N_{S_{2}}(Q_{2})$.
Hence $P_{i}\in \F_{i}^{f}$.
Note that $\Aut_{\F}(P)=\Aut_{\F_{1}}(P_{1})\times \Aut_{\F_{2}}(P_{2})$ and
$C_{S}(P)=C_{S_{1}}(P_{1})\times C_{S_{2}}(P_{2})$.
Then we have $\mathrm{rank}_{w}(\F_{1}\times \F_{2})=\mathrm{rank}_{w}(\F_{1})+\mathrm{rank}_{w}(\F_{2})$ and
$\rank(\F_{1}\times \F_{2})=\rank(\F_{1})+\rank(\F_{2})$.
\end{proof}

\section{A reformulation of Alperin's weight conjecture}

We use the notation as \cite{AKO}, \cite{K} and \cite{KR}.
For convenience, we restate the original notation below.

As usual, when $P$ is a $p$-subgroup of $G$, we let $\Br_P$ denote the Brauer homomorphism $\Br_P\colon (kG)^P\rightarrow kC_G(P)$ which sends an element  $\sum_{x\in G}\alpha_xx$ of $kG$ to the element  $\sum_{x\in C_G(P)}\alpha_xx$ of $kC_G(P)$.

Let $b$ be a block of $kG$.
Fix a maximal $(kG, b, G)$-Brauer pair $(P, e_P )$.
The fusion system $\F$ of $kGb$
(or of block $b$) over $P$ is the category $\F_{(P, e_P )}(kG, b, G)$, see \cite[Section IV.3.4]{AKO} for more details.

For a finite dimensional $k$-algebra $A$, 
let $l(A)$ denote the number of isomorphism classes of simple $A$-modules, 
and $z(A)$ denote the number of isomorphism classes of projective simple $A$-modules.
For a finite group $H$ and a central idempotent $c$ of $kH$, 
$z(kHc)$ is the number of blocks $d$ of $kH$ with trivial defect group contained in $c$, i.e. $cd = d$.

For each $p$-subgroup $Q$ of $G$, denote by $\overline{a}$ the image of an element or a subset $a$ of $kN_G(Q)$ under the canonical surjection
$kN_G(Q)\rightarrow kN_G(Q)/Q$.
Then $\overline{\Br_Q(b)}$ is either 0 or a central idempotent of $kN_G(Q)/Q$.

A pair of the form $(Q, w)$, where $Q$ is a $p$-subgroup of $G$, and $w$ is a block of $kN_G(Q)/Q$ such that $w$ has trivial defect group and $\overline{\Br_Q(b)}w= w$ is called a weight of $kGb$.
Clearly, $G$ acts on the set of $kGb$-weights by conjugation.

\begin{conjecture} [Alperin’s weight Conjecture {\cite{A}}]\label{AlperinConj}
The number of isomorphism classes of simple $kGb$-modules equals the number of $G$-orbits of $kGb$-weights.
That is to say, 
\[ l(kGb)=\sum_{Q\in \mathcal{S}_{p}(G)/G}z(k\overline{N_G(Q)}\overline{\Br_Q(b)}) \]
where $Q$ runs over a set of representatives of the $G$-orbits of the conjugation on the $p$-subgroups $\mathcal{S}_{p}(G)$ of $G$.
\end{conjecture}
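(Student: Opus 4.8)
The plan is to attack the conjecture through the Kn\"orr--Robinson alternating sum formalism, using the local rank $\rank(\F)$ introduced above to organise an induction on the length of normalised centric-radical chains. Recall that Kn\"orr and Robinson reformulated Conjecture \ref{AlperinConj} for the block $b$ as the vanishing of a Kn\"orr--Robinson-type alternating sum $\sum_{\sigma}(-1)^{|\sigma|}\,l(kN_G(\sigma)b_\sigma)$, taken over $G$-orbits of suitably normalised $p$-chains $\sigma$, where $b_\sigma$ is the block (or central idempotent) attached to $N_G(\sigma)$ via the Brauer homomorphisms recalled in Section 4. The first step is to transport this sum entirely into the block fusion system $\F=\F_{(P,e_P)}(kG,b,G)$: replace group-theoretic $p$-chains by normalised centric-radical chains in $\F$, and the numbers $l(kN_G(\sigma)b_\sigma)$ by invariants of the parabolic subsystems $N_\F(\sigma)$. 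The bookkeeping of Section 3 is exactly what makes this reduction well-defined --- the bijection between $\F^{fcr}\setminus\{O_p(\F)\}$ and the parabolic subsystems, the fact that $Q_n=N_S(\sigma_{n-1})=N_S(\sigma_n)$ at the top of a longest chain, and the monotonicity of $\rank$ under passage to local subsystems --- so that the alternating sum is compatible with the recursion defining $\rank(\F)$.

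Second, I would induct on $\rank(\F)$. The base case is $\rank(\F)=0$, i.e. $O_p(\F)=S$; here $b$ behaves like a nilpotent block, $l(kGb)=1$ by the Brou\'e--Puig theory of nilpotent blocks, and the unique weight $(S,w)$ accounts for the right-hand side, so the conjecture holds directly. For the inductive step, one peels off the outermost term of the alternating sum: the chains $\sigma$ whose first nontrivial subgroup $Q_1$ is canonical contribute, after summing over $Q_1\in\F^{fcr}$ with $O_p(\F)<Q_1<S$, Kn\"orr--Robinson sums attached to the strictly smaller systems $N_\F(Q_1)$, and the inductive hypothesis applies to each because $\rank(N_\F(Q_1))<\rank(\F)$; the chains with $Q_1=O_p(\F)$ or $Q_1=S$ are absorbed by the boundary conditions $N_\F(Q_0)=\F$, $O_p(\F)<Q_1$ built into the definition of normalised centric-radical chain. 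To glue the two sides one needs a locality lemma for weights: a weight $(Q,w)$ of $kGb$ should correspond to a weight of a parabolic subsystem $N_\F(R)$ for a canonical $R$, and conversely, so that the right-hand side of Conjecture \ref{AlperinConj} decomposes along parabolic chains in the same way the left-hand side does. This is the fusion-theoretic analogue of the standard reduction of the weight count along $p$-local subgroups and should follow from the Brauer-pair formalism together with Lemma \ref{equivalent}.

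The genuinely hard part --- and here one must be honest that Conjecture \ref{AlperinConj} is open --- is the cancellation in the alternating sum itself. Even with the clean fusion-theoretic rewriting, there is no known purely local mechanism forcing $\sum_\sigma(-1)^{|\sigma|}l(\cdot)$ to take the predicted value; the only presently available route to a complete proof is the Navarro--Tiep/Sp\"ath reduction, which reduces the blockwise form of AWC to the inductive blockwise Alperin weight condition for every finite simple group, and that condition is verified only for certain families (alternating groups, large classes of groups of Lie type, many sporadic and low-rank cases) and remains open in general. So my proposal is frankly conditional: modulo the inductive blockwise condition for the simple groups involved in $G$, the local-rank induction sketched above should assemble into a proof, with the simple-group verification being the substantive obstacle that this framework repackages rather than removes. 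A secondary, more tractable obstacle is controlling the block idempotents $b_\sigma$ --- equivalently the Brauer pairs $(Q_i,e_{Q_i})\le(P,e_P)$ and the images $\overline{\Br_{Q_i}(b)}$ --- simultaneously along a parabolic chain, so that $N_\F(\sigma)$ really is the fusion system of $kN_G(\sigma)b_\sigma$; this is technical but appears within reach using the maximal-Brauer-pair machinery recalled in Section 4.
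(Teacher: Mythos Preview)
The statement you are attempting to prove is labelled \textbf{Conjecture} in the paper, not Theorem: the paper does not prove Alperin's weight conjecture, nor does it claim to. What the paper actually does with this statement is \emph{reformulate} it --- Theorems~\ref{equi} and~\ref{equi2} show that Conjecture~\ref{AlperinConj} (for all blocks) is equivalent to the vanishing of the alternating sum $\sum_{\sigma\in\F^{cr}/\F}(-1)^{|\sigma|}l(k_{\alpha}\Out_{N_{\F}(\sigma)}(Q_{n}))$ over centric-radical chains in the block fusion system. There is therefore no ``paper's own proof'' to compare against.

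Your proposal, to your credit, is candid that it is not a proof either: you explicitly say the cancellation in the alternating sum is ``the genuinely hard part'', that the conjecture ``is open'', and that your argument is ``frankly conditional'' on the inductive blockwise Alperin weight condition for all finite simple groups --- a condition which is itself not yet established. Reducing an open conjecture to another open conjecture is not a proof, and you know this. The local-rank induction you sketch does accurately reflect the organisational role the paper's $\rank(\F)$ is meant to play, and your description of transporting the Kn\"orr--Robinson sum into $\F$ parallels Lemma~\ref{group2fusionConj} and Proposition~\ref{altsum}; but the paper stops at the equivalence, whereas you try to push through to the conjecture itself and hit exactly the wall one would expect. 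One further inaccuracy: your base case $\rank(\F)=0$ gives $O_p(\F)=S$, hence $\F=\F_S(S)$ and $b$ nilpotent, which is fine, but the inductive step does not reduce cleanly to strictly smaller $\rank$ without already knowing the alternating-sum vanishing for the parabolic subsystems $N_\F(Q_1)$ --- and that vanishing is precisely the reformulated conjecture for those subsystems, so the induction is circular unless the simple-group input is supplied.
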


By \cite[Proposition 5.5, 5.6 ]{K}, the number of $G$-orbits of $kGb$-weights is equal to
\[ \sum_{Q\in\mathrm{Obj}(\F^{cr})/\F}z(k\overline{N_G(Q)}\overline{\widehat{e_Q}})=\sum_{Q\in\mathrm{Obj}(\F^{cr})/\F}z(k\overline{N_G(Q, e_Q)e_Q})\]
where $\widehat{e_Q}$ is the $N_G(Q)$-orbit sum of $e_Q$, $(Q, e_Q)\leq (P, e_P)$, and 
$Q$ runs over a set of representatives of the $\F$-isomorphism classes of the set $\mathrm{Obj}(\F^{cr})$ of $\F$-centric and $\F$-radical subgroups of $P$.
That is to say, only the $\F$-centric and $\F$-radical subgroups of $P$ has potential contribution to the $kGb$-weights. 

Note that weight conjuctures have been generalised to fusion systems in \cite{KLLS}, while we mainly focus on the fusion systems for blocks of finite group. Thus we still calculate in groups rather than in fusion systems, pay much attention to the alternating sum over chains.

Analogue to \cite[Proposition 3.6]{KR}, 
we can calculate the alternating sum over chains.
Let $\sigma\colon 1<Q_{1}<Q_{2}<\cdots<Q_{n}$ be a fully normalized chain in $\F$,
i.e., $Q_{1}$ is fully normalized in $\F$,
and  $Q_{2}$ is fully normalized in $N_{\F}(Q_{1})$, recursively, $Q_{i+1}$ is fully normalized in $N_{\F}(\sigma_{i})$, where $N_{\F}(\sigma_{i})=N_{N_{\F}(\sigma_{i-1})}(Q_{i})$.
Then we have a chain of $(kG, b, G)$-subpairs
\[\sigma\colon (1, b)<(Q_{1}, e_{1})<(Q_2, e_{2})<\cdots<(Q_n, e_{n}), \]
which is contained in the maximal $(kG, b, G)$-subpair $(P, e_P)$.
Denoted by the stabilizers of the chains respectively by
\[N_{G}(\sigma)=\bigcap_{i=1}^{n}N_{G}(Q_{i}, e_{i}), \quad G_{\sigma}=\bigcap_{i=1}^{n}N_{G}(Q_{i}).\]
Then $N_{G}(\sigma)=\bigcap_{i=1}^{n}N_{G_{\sigma}}(e_{i})=N_{G_{\sigma}}(e_{n})$.
Indeed, for any $g\in N_{G_{\sigma}}(e_{n})$, $^{g}(Q_n, e_{n})=(Q_n, e_{n})$, 
and for each $i$, ${^{g}}(Q_i, e_{i})\leq {^{g}}(Q_n, e_{n})=(Q_n, e_{n})$. Then $^{g}Q_{i}=Q_{i}$ and so $^{g}e_{i}=e_{i}$ by \cite[Theorem IV.2.10]{AKO}.

By \cite[Lemma IV.3.17]{AKO}, $e_{i}$ is a block of $G_{i}=\cap_{j=1}^{i}N_{G}(Q_{j}, e_{j})$.
And $e_{1}$ is a block of $N_G(Q_{1}, e_{1})$.
Also we have a chain of $(kN_G(Q_{1}, e_{1}), e_{1}, N_G(Q_{1}, e_{1}))$-subpairs
\[\sigma^1\colon(1, e_{1})<(Q_2, e_{2})<\cdots<(Q_n, e_{n}).\]
Furthermore, as $Q_{1}$ is fully normalized in $\F$, by \cite[Theorem IV.3.19]{AKO}, we have
\[N_\F(Q_{1})=\F_{(N_P(Q_{1}), e_{N_P(Q_{1})})}(kN_G(Q_{1}, e_{1}), e_{1}, N_G(Q_{1}, e_{1})).\]
In general, let \[\sigma^i\colon(1, e_{i})<(Q_{i+1}, e_{i+1})<\cdots<(Q_n, e_{n}).\]
It is a chain of $(kG_{i}, e_{i}, G_{i})$-subpairs and $e_{i}\in C_G(Q_i)^{G_{i}}$.
%
%
Note that the chain
\[\sigma\colon (1, b)<(Q_{1}, e_{1})<(Q_2, e_{2})<\cdots<(Q_n, e_{n}), \]
associates with a chain
\[\tau\colon (G, b)>(G_{1}, e_1)>(G_{2}, e_{2})>\cdots>(G_{n}, e_{n}), \]
which corresponds to a descending chain of block fusion systems
\[\F\supset N_\F(Q_{1})\supset N_\F(\sigma_2)\supset\cdots\supset N_\F(\sigma_n).\]
Here $e_{i}$ is a block of $G_{i}$. Recall that $Q_{i}$ is fully normalized in $N_{\F}(\sigma_{i-1})$,
then by \cite[Theorem IV.3.19]{AKO} we have 
\[N_\F(\sigma_i)=\F_{(N_{P}(\sigma_{i}), e_{N_{P}(\sigma_{i})})}(kG_{i}, e_{i}, G_{i}).\]
%
%
%
%
%

Note that $G_{\sigma}\leq N_{G}(Q_{n})$ acts on the blocks of $C_{G}(Q_{n})$ associated with $b$, 
which is the set of  $e_{n_{i}}\in \Bl(C_{G}(Q_{n}))$ satisfying $\Br_{Q_{n}}(b)e_{n_{i}}=e_{n_{i}}$.
Therefore,
\[\Br_{Q_{n}}(b)=\sum_{i\in I} \widehat{e_{n_{i}}},\] 
where $e_{n_{i}}, i\in I$ 
runs over a representatives of $G_{\sigma}$-orbit of $C_{G}(Q_{n})$ blocks in $\Br_{Q_{n}}(b)$, where
\[\widehat{e_{n_{i}}}=\Tr_{N_{G_{\sigma}}(e_{n_{i}})}^{G_{\sigma}}(e_{n_{i}}).\]

\begin{lemma}
Let $e_{n_{i}}, i\in I$ be a representatives of $G_{\sigma}$-orbits of $C_{G}(Q_{n})$ blocks in $\Br_{Q_{n}}(b)$.
Let $\sigma(i)$ be the chain 
\[\sigma(i)\colon (1, b)<(Q_{1}, e_{1i})<(Q_2, e_{2i})<\cdots<(Q_n, e_{ni}).\]
Let $ (P_{i}, e_{P_{i}})$ be a maximal  $b$-Brauer pairs containing $(Q_{n}, e_{n_{i}})$.
Then there exist $g_{i}\in G$ such that $ ^{g_{i}}(P_{i}, e_{P_{i}})=(P, e_{P})$,
and  $^{g_{i}}\sigma(i), i\in I$ is a representatives of $\F$-conjugacy classes of 
\[^{g}\sigma\colon (1, b)<(^{g}Q_{1}, e_{^{g}Q_{1}})<(^{g}Q_2, e_{^{g}Q_2})<\cdots<(^{g}Q_n, e_{^{g}Q_n})\subseteq (P, e_P),\]
whenever $^{g}Q_n\subseteq  P$.
\end{lemma}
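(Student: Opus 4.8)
The plan is to produce a bijection from the index set $I$ to the set of $\F$-conjugacy classes of the chains ${}^{g}\sigma$ with ${}^{g}Q_n\subseteq P$, where for $g\in G$ with ${}^{g}Q_n\subseteq P$ the chain ${}^{g}\sigma$ is the chain of $(kG,b,G)$-subpairs inside $(P,e_P)$ whose underlying chain of subgroups is $1<{}^{g}Q_1<\cdots<{}^{g}Q_n$, its second components $e_{{}^{g}Q_k}$ being forced by the unique-subpair property; the bijection should send $i$ to the class of ${}^{g_i}\sigma(i)$. Three standard facts will do the work: the maximal $(kG,b,G)$-Brauer pairs form a single $G$-orbit (which is what produces the $g_i$, and, since then ${}^{g_i}(Q_n,e_{ni})\leq(P,e_P)$, shows by uniqueness of subpairs that ${}^{g_i}(Q_k,e_{ki})=({}^{g_i}Q_k,e_{{}^{g_i}Q_k})$ for all $k$, so that ${}^{g_i}\sigma(i)$ really is a chain of the asserted shape); the block below a given Brauer pair at a subgroup is uniquely determined and these blocks are compatible along a subgroup chain; and $\Br$ is $G$-equivariant while $b$ is $G$-stable.

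For surjectivity, fix $g\in G$ with ${}^{g}Q_n\subseteq P$ and set $f:={}^{g^{-1}}e_{{}^{g}Q_n}$, a block of $C_G(Q_n)$. By equivariance of $\Br$ and $G$-stability of $b$ we get $\Br_{Q_n}(b)f={}^{g^{-1}}\bigl(\Br_{{}^{g}Q_n}(b)\,e_{{}^{g}Q_n}\bigr)={}^{g^{-1}}e_{{}^{g}Q_n}=f$, so $f$ is one of the blocks of $C_G(Q_n)$ occurring in $\Br_{Q_n}(b)=\sum_{i\in I}\widehat{e_{ni}}=\sum_{i\in I}\Tr_{N_{G_\sigma}(e_{ni})}^{G_\sigma}(e_{ni})$; equivalently $f={}^{h}e_{ni}$ for some $i\in I$ and some $h\in G_\sigma$. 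Put $z:=gh\,g_i^{-1}$. Since $h\in G_\sigma=\bigcap_k N_G(Q_k)$ normalizes every $Q_k$, we have ${}^{z}({}^{g_i}Q_k)={}^{gh}Q_k={}^{g}Q_k$ for all $k$, and ${}^{z}\bigl({}^{g_i}(Q_n,e_{ni})\bigr)={}^{gh}(Q_n,e_{ni})=({}^{g}Q_n,e_{{}^{g}Q_n})\leq(P,e_P)$. Hence $c_z$ is a morphism ${}^{g_i}Q_n\to{}^{g}Q_n$ in $\F$ which carries the top pair of ${}^{g_i}\sigma(i)$ to the top pair of ${}^{g}\sigma$ and respects the underlying subgroup chains, so by uniqueness of subpairs it maps all of ${}^{g_i}\sigma(i)$ onto ${}^{g}\sigma$. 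Thus every ${}^{g}\sigma$ of the stated kind is $\F$-conjugate to some ${}^{g_i}\sigma(i)$.

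For injectivity, suppose ${}^{g_i}\sigma(i)$ and ${}^{g_j}\sigma(j)$ are $\F$-conjugate. An isomorphism of chains of $b$-subpairs lying inside $(P,e_P)$ is induced by a single element $x\in G$ with ${}^{x}({}^{g_i}Q_k)={}^{g_j}Q_k$ for all $k$ and ${}^{x}({}^{g_i}Q_n,e_{{}^{g_i}Q_n})=({}^{g_j}Q_n,e_{{}^{g_j}Q_n})$. Then $y:=g_j^{-1}x\,g_i$ satisfies ${}^{y}Q_k=Q_k$ for all $k$, so $y\in G_\sigma$, while ${}^{y}e_{ni}={}^{g_j^{-1}}\bigl({}^{x}e_{{}^{g_i}Q_n}\bigr)={}^{g_j^{-1}}e_{{}^{g_j}Q_n}=e_{nj}$; hence $e_{ni}$ and $e_{nj}$ lie in one $G_\sigma$-orbit and $i=j$. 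Together these two steps give the asserted system of representatives.

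I expect the only real obstacle to be the Brauer-pair bookkeeping in the surjectivity step: one has to be certain that ${}^{g^{-1}}e_{{}^{g}Q_n}$ is genuinely one of the $C_G(Q_n)$-blocks appearing in the orbit-sum expansion of $\Br_{Q_n}(b)$, and that conjugating by $z$ (respectively by $x$) legitimately transports whole chains of Brauer subpairs inside $(P,e_P)$. Once the uniqueness of subpairs along a subgroup chain is brought to bear, the remaining verifications are routine.
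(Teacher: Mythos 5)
Your proposal is correct and follows essentially the same route as the paper: existence of $g_i$ from the single $G$-orbit of maximal $(kG,b,G)$-Brauer pairs, surjectivity by moving ${}^{g^{-1}}e_{{}^{g}Q_n}$ into a chosen $G_\sigma$-orbit representative and transporting the whole chain via uniqueness of subpairs in $(P,e_P)$, and injectivity by showing the conjugating element lies in $G_\sigma$ and hence identifies orbit representatives. Your element $z=ghg_i^{-1}$ is just the inverse of the paper's conjugator $g_ixg^{-1}$, so the arguments coincide up to direction.
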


\begin{proof}
Let $ (P_{i}, e_{P_{i}})$ be a maximal  $b$-Brauer pair contains $(Q_{n}, e_{n_{i}})$, that is to say, $(Q_{n}, e_{n_{i}})\leq  (P_{i}, e_{P_{i}})$.
Then by \cite[Theorem IV.2.20]{AKO} there is $g_{i}\in G $, such that $ ^{g_{i}}(P_{i}, e_{P_{i}})=(P, e_{P})$.
Let $\sigma(i)$ be the chain 
\[\sigma(i)\colon (1, b)<(Q_{1}, e_{1i})<(Q_2, e_{2i})<\cdots<(Q_n, e_{ni}).\]
Then $^{g_{i}}\sigma(i)$ is a chain in $\F$.

First, we show  that $^{g_{i}}\sigma(i)$ and  $^{g_{j}}\sigma(j)$ are not $\F$-conjugate whenever $i\neq j$.
In fact, if there exists  $g\in G$ such that $^{gg_{i}}\sigma(i)= {^{g_{j}}}\sigma(j)$, then
$^{g_{j}^{-1}gg_{i}}\sigma(i)=\sigma(j)$. So $g_{j}^{-1}gg_{i}\in G_{\sigma}$ and $^{g_{j}^{-1}gg_{i}}e_{ni}=e_{nj}$, which is a contradiction as $e_{ni}$ is representatives of $G_{\sigma}$-orbits.

We claim that $^{g_{i}}(Q_{n}, e_{n_{i}}), i\in I$ is a representatives of $\F$-conjugacy classes whose first term is $G$-conjugate to $Q_{n}$. 
Indeed, for any $g\in G$ and $(1, b)\subseteq (^{g}Q_{n}, e_{^{g}Q_{n}})\leq (P, e_{P})$, we have 
$(1, b)\subseteq (Q_{n}, {^{g^{-1}}}\!e_{^{g}Q_{n}})$ and $\Br_{Q_{n}}(b)\, {^{g^{-1}}}\!e_{^{g}Q_{n}}={^{g^{-1}}}\!e_{^{g}Q_{n}}$. 
So there is $x\in G_{\sigma}$ satisfying $^{xg^{-1}}\!e_{^{g}Q_{n}}=e_{ni}$ for some $i\in I$.
Thus $^{xg^{-1}}(^{g}Q_{n}, e_{^{g}Q_{n}})=(Q_{n}, e_{ni})$.
Since $^{g_{i}}(Q_{n}, e_{ni})\leq (P, e_{P})$, it follows that $^{g_{i}xg^{-1}}(^{g}Q_{n}, e_{^{g}Q_{n}})\leq (P, e_{P})$. 
Therefore $^{g_{i}xg^{-1}}(^{g}Q_{n}, e_{^{g}Q_{n}})={^{g_{i}}}(Q_{n}, e_{ni})$, 
so $(^{g}Q_{n}, e_{^{g}Q_{n}})$ and ${^{g_{i}}}(Q_{n}, e_{ni})$ are conjugate in $\F$ as claimed.

Moreover, 
for any $g\in G$ satisfying $^{g}Q_n\subseteq P$, we have a chain in $\F$, 
\[^{g}\sigma\colon (1, b)<(^{g}Q_{1}, e_{^{g}Q_{1}})<(^{g}Q_2, e_{^{g}Q_2})<\cdots<(^{g}Q_n, e_{^{g}Q_n})\subseteq (P, e_P),\]
which conjugated by $g^{-1}$ we have a chain of Brauer pairs, 
\[
(1, b)<(Q_{1}, {^{g^{-1}}e}_{^{g}Q_{1}})\leq (Q_{2}, {^{g^{-1}}e}_{^{g}Q_{2}})\leq\cdots\leq(Q_{n}, {^{g^{-1}}e}_{^{g}Q_{n}}).
\]
As $\Br_{Q_{n}}(b)\,{^{g^{-1}}\!e_{^{g}Q_{n}}}={^{g^{-1}}e}_{^{g}Q_{n}}$, 
we may assume that ${^{xg^{-1}}e}_{^{g}Q_{n}}=e_{ni}$ for some $x\in G_{\sigma}$ and $i\in I$.
Then conjugated by $x\in G_{\sigma}$, we get the chain $\sigma(i)$, 
\[\sigma(i)\colon (1, b)<(Q_{1}, e_{1i})<(Q_2, e_{2i})<\cdots<(Q_n, e_{ni}), \]
Recall that $^{g_{i}}(Q_n, e_{ni})\leq (P, e_{P})$, 
then conjugated by $g_{i}$ we obtain a chain in $\F$, 
\[^{g_{i}}\sigma(i)\colon (1, b)<{^{g_{i}}}(Q_{1}, e_{1i})<{^{g_{i}}}(Q_2, e_{2i})<\cdots<{^{g_{i}}}(Q_n, e_{ni})\subseteq (P, e_P). \]
So $^{g}\sigma$ and $^{g_{i}}\sigma(i)$ are conjugate in $\F$ by $g_{i}xg^{-1}$. 
Therefore  $^{g_{i}}\sigma(i), i\in I$ is a representatives of $\F$-conjugacy classes of $\{^{g}\sigma\mid g\in G\}$ the chain of $G$-orbit of $\sigma$ constituted by subgroups of $P$.
\end{proof}

\begin{lemma}\label{group2fusionConj}
\[
\sum_{\sigma/G}(-1)^{|\sigma|}l(kG_{\sigma}\Br_{Q_n}(b))
=\sum_{\sigma/\F}(-1)^{|\sigma|}l(kN_{G}(\sigma)e_{n}),
\]
where the left hand side runs over the representative of $G$-conjugacy classes of chains,
the right hand side runs over the representative of $\F$-conjugacy classes of chains in $\F$, and 
$\sigma\colon 1<Q_{1}<Q_{2}<\cdots<Q_{n}$ is a chain and $(Q_{n},e_{n})\leq (P, e_{P})$.
\end{lemma}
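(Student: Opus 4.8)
The plan is to reduce both sides of the asserted identity to a common sum indexed by the $\F$-conjugacy classes of chains of Brauer pairs contained in $(P,e_P)$, using the previous lemma to translate between $G$-orbits of subgroup chains and $\F$-orbits of Brauer-pair chains, and using the Clifford theory of blocks (Fong--Reynolds) to split the group-algebra factors. So fix a chain $\sigma\colon 1<Q_1<\cdots<Q_n$ occurring on the left-hand side, choose the representative of its $G$-class with $Q_n\le P$, and write $\Br_{Q_n}(b)=\sum_{i\in I}\widehat{e_{n_i}}$ for the decomposition into $G_\sigma$-orbit sums of the blocks of $kC_G(Q_n)$ lying in $\Br_{Q_n}(b)$, as in the display preceding the previous lemma. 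Since $C_G(Q_n)\unlhd G_\sigma$ (because $G_\sigma\le N_G(Q_n)$) and $b$ is $G$-stable, each $\widehat{e_{n_i}}$ is a central idempotent of $kG_\sigma$, the $\widehat{e_{n_i}}$ are pairwise orthogonal, and their sum is $\Br_{Q_n}(b)$; hence $kG_\sigma\Br_{Q_n}(b)=\bigoplus_{i\in I}kG_\sigma\widehat{e_{n_i}}$ and $l(kG_\sigma\Br_{Q_n}(b))=\sum_{i\in I}l(kG_\sigma\widehat{e_{n_i}})$.

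Next I would apply the Fong--Reynolds correspondence to the normal subgroup $C_G(Q_n)\unlhd G_\sigma$ and the block $e_{n_i}$, whose inertia group in $G_\sigma$ is $N_{G_\sigma}(e_{n_i})$: the algebra $kG_\sigma\widehat{e_{n_i}}$ is Morita equivalent to $kN_{G_\sigma}(e_{n_i})e_{n_i}$, so $l(kG_\sigma\widehat{e_{n_i}})=l(kN_{G_\sigma}(e_{n_i})e_{n_i})$. Let $\sigma(i)\colon (1,b)<(Q_1,e_{1i})<\cdots<(Q_n,e_{n_i})$ be the associated chain of Brauer pairs. Applying the identity $N_G(\tau)=N_{G_\tau}(e_n)$, recorded earlier in this section, to $\tau=\sigma(i)$ --- whose underlying subgroups, and hence whose stabiliser $G_{\sigma(i)}=G_\sigma$, agree with those of $\sigma$ --- gives $N_{G_\sigma}(e_{n_i})=N_G(\sigma(i))$, so that
\[
l(kG_\sigma\Br_{Q_n}(b))=\sum_{i\in I}l\bigl(kN_G(\sigma(i))e_{n_i}\bigr).
\]

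By the previous lemma there is $g_i\in G$ with ${}^{g_i}(Q_n,e_{n_i})\le(P,e_P)$; then ${}^{g_i}\sigma(i)$ is a chain of Brauer pairs contained in $(P,e_P)$, of length $|\sigma|$ and with final block ${}^{g_i}e_{n_i}$, and conjugation by $g_i$ is an algebra isomorphism, so $l(kN_G(\sigma(i))e_{n_i})=l(kN_G({}^{g_i}\sigma(i))\,{}^{g_i}e_{n_i})$. Now I would sum over the $G$-conjugacy classes of $\sigma$. The previous lemma says that for each fixed $\sigma$ the chains ${}^{g_i}\sigma(i)$, $i\in I$, form an irredundant set of representatives of the $\F$-classes of Brauer-pair chains in $(P,e_P)$ whose underlying subgroup chain is $G$-conjugate to $\sigma$; since $\F$-conjugate chains of Brauer pairs have $G$-conjugate underlying subgroup chains, as $\sigma$ runs over all its $G$-classes these partial lists assemble, without repetition, into a complete set of representatives of the $\F$-classes of Brauer-pair chains in $(P,e_P)$. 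Combining the displayed identities and using $(-1)^{|\sigma|}=(-1)^{|{}^{g_i}\sigma(i)|}$ yields
\[
\sum_{\sigma/G}(-1)^{|\sigma|}l(kG_\sigma\Br_{Q_n}(b))=\sum_{\sigma/G}\sum_{i\in I}(-1)^{|\sigma|}l\bigl(kN_G(\sigma(i))e_{n_i}\bigr)=\sum_{\sigma/\F}(-1)^{|\sigma|}l(kN_G(\sigma)e_n),
\]
which is the assertion.

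The step I expect to be the main obstacle is the Fong--Reynolds reduction in this relative form together with the bookkeeping in the last paragraph: one must verify that $\Br_{Q_n}(b)=\sum_i\widehat{e_{n_i}}$ really exhausts the blocks of $kC_G(Q_n)$ lying over $b$, that $N_{G_\sigma}(e_{n_i})$ is exactly the inertia subgroup to which Fong--Reynolds applies and that it coincides with $N_G(\sigma(i))$, and that no $\F$-class of Brauer-pair chain in $(P,e_P)$ is counted twice or omitted as $\sigma$ ranges over its $G$-classes. The remaining ingredients --- invariance of $l(\,\cdot\,)$ under Morita equivalence and under conjugation in $G$, and the orthogonality of the idempotents $\widehat{e_{n_i}}$ --- are routine.
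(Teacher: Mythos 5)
Your proposal is correct and takes essentially the same route as the paper: decompose $\Br_{Q_n}(b)$ into the $G_\sigma$-orbit sums $\widehat{e_{n_i}}$, use the preceding lemma to reindex the resulting double sum by $\F$-conjugacy classes of Brauer-pair chains in $(P,e_P)$, and apply the Fong--Reynolds Morita equivalence $kG_\sigma\widehat{e_{n}}\approx kN_{G_\sigma}(e_{n})e_{n}$ together with the identification $N_{G_\sigma}(e_{n})=N_G(\sigma)$. The only (immaterial) difference is ordering: the paper conjugates by the $g_i$ and reindexes over $\F$-classes first and invokes Fong--Reynolds at the end, whereas you apply Fong--Reynolds to each $kG_\sigma\widehat{e_{n_i}}$ before assembling the $\F$-class representatives.
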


\begin{proof}

Note that $\Br_{Q_{n}}(b)=\sum_{i\in I} \widehat{e_{ni}}$, we have
\[
l(kG_{\sigma}\Br_{Q_{n}}(b))
=l(kG_{\sigma}\sum_{i\in I} \widehat{e_{ni}})
=\sum_{i\in I} l(kG_{\sigma}\widehat{e_{ni}}).
\]
Since \[^{g_i}\widehat{e_{ni}}={^{g_i}}\Tr_{N_{G_{\sigma}}(e_{n_{i}})}^{G_{\sigma}}(e_{n_{i}})
=\Tr_{N_{G_{^{g_i}\sigma}}(^{g_i}e_{n_{i}})}^{G_{^{g_i}\sigma}}(^{g_i}e_{n_{i}})=\widehat{^{g_i}e_{n_{i}}},\]
we obtain that 
$^{g_{i}}(kG_{\sigma}\widehat{e_{ni}})
=kG_{{^{g_{i}}}\sigma}\widehat{{^{g_{i}}}e_{ni}}$.
Hence 
\[
l(kG_{\sigma}\Br_{Q_{n}}(b))
=\sum_{i\in I} l(kG_{\sigma}\widehat{e_{ni}})
=\sum_{i\in I} l(^{g_i}kG_{\sigma}\widehat{e_{ni}})
=\sum_{i\in I} l(kG_{{^{g_{i}}}\sigma}\widehat{{^{g_{i}}}e_{ni}}).
\]
where the sum runs over the representative of $\F$-conjugacy classes of $G$-conjugates of $\sigma$,
i.e.,
\[
l(kG_{\sigma}\Br_{Q_n}(b))=\sum_{{^{g_i}\!\sigma}\in {^{G}\!\sigma}/\F}l(kG_{^{g_i}\sigma}\widehat{^{g_i}e_{n_{i}}}).
\]
Thus
\[
\sum_{\sigma/G}(-1)^{|\sigma|}l(kG_{\sigma}\Br_{Q_n}(b))=\sum_{\sigma/\F}(-1)^{|\sigma|}l(kG_{\sigma}\widehat{e_n}).
\]
where 
\[\sigma\colon (1, b)<(Q_{1}, e_{1})<(Q_2, e_{2})<\cdots<(Q_n, e_{n})\subseteq (P, e_P),\]
the left hand side runs over a representative of $G$-conjugacy classes of chain of subgroups, 
and the right hand side runs over a representative of $\F$-conjugacy classes of chain of $b$-Brauer pairs which is contained in $(P, e_{P})$.

Since $C_{G}(Q_{n})\leq N_{G_{\sigma}}(e_{n})\leq G_{\sigma}\leq N_{G}(Q_{n})$, it follows from \cite[Lemma IV.3.17]{AKO} that $e_{n}\in \Bl(N_{G_{\sigma}}(e_{n}))$ and $\widehat{e_{n}}\in \Bl(G_{\sigma})$.
Then by \cite[Theorem 6.8.3]{L} or the argument between \cite[Definition 4.5]{K} and \cite[Proposition 4.6]{K}, we have a Morita equivalence $\Mod(kG_{\sigma}\widehat{e_n})\approx\Mod(kN_{G_{\sigma}}(e_{n})e_{n})$. So 
\[l(kG_{\sigma}\widehat{e_n})
=l(kN_{G_{\sigma}}(e_{n})e_{n})=l(kN_{G}(\sigma)e_{n}).\]
Therefore,
\[
\sum_{\sigma/G}(-1)^{|\sigma|}l(kG_{\sigma}\Br_{Q_n}(b))
=\sum_{\sigma/\F}(-1)^{|\sigma|}l(kN_{G}(\sigma)e_{n}).
\]

\end{proof}

\begin{proposition}\label{altsum}
Let $b$ be a block of $kG$.
Then we have
\[
\sum_{\sigma/\F}(-1)^{|\sigma|}l(kN_{G}(\sigma)e_{n})
=l(kGb)-\sum_{1\neq Q/G}\sum_{j=1}^{r}\sum_{\tau/\E_{j}}(-1)^{|\tau|}l(kN_{\overline{N_{G}(Q)}}({\tau})f_{n}),
\]
where $\sigma/\F$ means $\sigma$ run over a representative of $\F$-conjugacy classes of chains in $\F$.
\end{proposition}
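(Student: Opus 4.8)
The plan is to transport the computation to the group $G$ by means of Lemma~\ref{group2fusionConj}, carry out the Kn\"orr--Robinson telescoping over $G$ (the analogue of \cite[Proposition 3.6]{KR}), and then return to fusion systems by a second application of Lemma~\ref{group2fusionConj}, this time to the local quotients $\overline{N_G(Q)}=N_G(Q)/Q$ together with their blocks.

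First I would use Lemma~\ref{group2fusionConj} to rewrite the left-hand side as $\sum_{\sigma/G}(-1)^{|\sigma|}l(kG_\sigma\Br_{Q_n}(b))$, the sum running over $G$-conjugacy classes of normal chains $\sigma\colon 1<Q_1<\cdots<Q_n$ (a chain with $\Br_{Q_n}(b)=0$ contributes nothing, so no constraint on $Q_n$ is needed). I then split off the chain of length $0$, namely $\sigma\colon 1$, whose term equals $l(kG\,b)=l(kGb)$, from the chains of length $n\ge 1$. Each such chain is determined by its first term $Q_1\ne 1$ together with the chain $\overline\sigma\colon 1<Q_2/Q_1<\cdots<Q_n/Q_1$ in $\overline{N_G(Q_1)}$; this is legitimate because $Q_i\lhd Q_n$ forces $Q_i\le N_S(Q_1)$ and makes $\overline\sigma$ a normal chain again. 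Along this correspondence $|\sigma|=|\overline\sigma|+1$, the stabiliser $G_\sigma$ maps onto the stabiliser of $\overline\sigma$ in $\overline{N_G(Q_1)}$ with kernel $Q_1$, the quantity $l(kG_\sigma\Br_{Q_n}(b))$ equals the corresponding quantity for $\overline\sigma$ in $\overline{N_G(Q_1)}$ relative to $\overline{\Br_{Q_1}(b)}$, and a $G$-class of $\sigma$ amounts to a $G$-class of $Q_1$ followed by an $\overline{N_G(Q_1)}$-class of $\overline\sigma$. This is exactly \cite[Proposition 3.6]{KR} carried over to the present situation, and writing $Q$ for $Q_1$ it gives
\[
\sum_{\sigma/\F}(-1)^{|\sigma|}l(kN_G(\sigma)e_n)=l(kGb)-\sum_{1\ne Q/G}\ \sum_{\overline\sigma/\overline{N_G(Q)}}(-1)^{|\overline\sigma|}\,l\bigl(k\,\overline{N_G(Q)}_{\overline\sigma}\,\overline{\Br_{\overline{Q_n}}(\overline{\Br_Q(b)})}\bigr),
\]
where $Q$ runs over representatives of the $G$-classes of nontrivial $p$-subgroups of $G$.

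Next, for each such $Q$ I would decompose $\overline{\Br_Q(b)}=f_1+\cdots+f_r$ into blocks of $k\overline{N_G(Q)}$. Since the Brauer homomorphism is multiplicative and $l$ is additive on orthogonal idempotents, the inner sum over $\overline\sigma$ splits as the sum over $j=1,\dots,r$ of $\sum_{\overline\sigma/\overline{N_G(Q)}}(-1)^{|\overline\sigma|}l\bigl(k\,\overline{N_G(Q)}_{\overline\sigma}\,\overline{\Br_{\overline{Q_n}}(f_j)}\bigr)$, and for fixed $j$ this is precisely the group-theoretic alternating sum attached to the block $f_j$ of $\overline{N_G(Q)}$ --- that is, the analogue of the original left-hand side for the triple $(\overline{N_G(Q)},f_j,\E_j)$. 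Applying Lemma~\ref{group2fusionConj} to the block fusion system $\E_j$ of $f_j$ rewrites the $j$-th summand as $\sum_{\tau/\E_j}(-1)^{|\tau|}l(kN_{\overline{N_G(Q)}}(\tau)f_n)$; substituting back into the displayed identity yields the assertion.

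The two appeals to Lemma~\ref{group2fusionConj} are routine bookkeeping; for the second one the only point to check is that the tail chains really do sit inside a maximal $f_j$-Brauer pair of $\overline{N_G(Q)}$, which holds because $Q_i\lhd Q_n$ gives $Q_n\le N_P(Q_1)$ and $(N_P(Q_1),e_{N_P(Q_1)})$ is a maximal $e_1$-Brauer pair of $N_G(Q_1,e_1)$ by \cite[Theorem IV.3.19]{AKO}. The genuinely delicate step is the Kn\"orr--Robinson recursion itself: one has to establish the chain bijection, the matching of conjugacy classes, and above all the identity relating $l(kG_\sigma\Br_{Q_n}(b))$ to the analogous quantity for $\overline\sigma$ over $\overline{N_G(Q_1)}$. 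The key inputs there are that $Q_1\lhd G_\sigma$, so the normal $p$-subgroup $Q_1$ acts trivially on every simple $kG_\sigma\Br_{Q_n}(b)$-module and one may pass to $G_\sigma/Q_1$, together with the transitivity of the Brauer construction for $Q_1\lhd Q_n$ in the form that $\Br_{Q_n}(b)$ corresponds, after reduction modulo $Q_1$, to $\overline{\Br_{Q_n/Q_1}(\Br_{Q_1}(b))}$. This mirrors \cite[Proposition 3.6]{KR}, and I expect it to be the one nontrivial ingredient of the proof.
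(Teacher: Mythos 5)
Your proposal is correct and follows essentially the same route as the paper: pass between the fusion-system sum and the group-level sum via Lemma~\ref{group2fusionConj}, invoke the Kn\"orr--Robinson recursion \cite[Proposition 3.6]{KR} over $G$, decompose $\overline{\Br_Q(b)}$ into blocks of $\overline{N_G(Q)}$, and apply Lemma~\ref{group2fusionConj} again to each block $c_j$ with its fusion system $\E_j$. The only difference is presentational: the paper simply cites \cite[Proposition 3.6]{KR}, whereas you sketch its proof (the chain-splitting and passage to $\overline{N_G(Q_1)}$), which is not needed beyond the citation.
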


\begin{proof}

By \cite[Proposition 3.6]{KR}, 
\[
\sum_{\sigma/G}(-1)^{|\sigma|}l(kG_{\sigma}\Br_{Q_n}(b))
=l(kGb)-\sum_{1\neq Q/G}\sum_{\bar{\sigma}/\overline{N_{G}(Q)}}(-1)^{|\bar{\sigma}|}l(k\overline{N_{G}(Q)}_{\bar{\sigma}}\Br_{\overline{Q_n}}(\overline{\Br_{Q}(b)})).
\]
We may assume that $\overline{\Br_{Q}(b)}=\sum_{j=1}^{r}{c_j}, \,{c_j}\in\Bl(\overline{N_{G}(Q)})$
is the blocks decomposition of the central idempotent $\overline{\Br_{Q}(b)}$.
Thus we have
\[\sum_{\bar{\sigma}/\overline{N_{G}(Q)}}(-1)^{|\bar{\sigma}|}l(k\overline{N_{G}(Q)}_{\bar{\sigma}}\Br_{\overline{Q_n}}(\overline{\Br_{Q}(b)}))
=\sum_{j=1}^{r}\sum_{\bar{\sigma}/\overline{N_{G}(Q)}}(-1)^{|\bar{\sigma}|}l(k\overline{N_{G}(Q)}_{\bar{\sigma}}\Br_{\overline{Q_n}}(c_{j}).\]
Then
\[
\sum_{\sigma/G}(-1)^{|\sigma|}l(kG_{\sigma}\Br_{Q_n}(b))
=l(kGb)-\sum_{1\neq Q/G}\sum_{j=1}^{r}\sum_{\bar{\sigma}/\overline{N_{G}(Q)}}(-1)^{|\bar{\sigma}|}l(k\overline{N_{G}(Q)}_{\bar{\sigma}}\Br_{\overline{Q_n}}(c_{j})).
\]

Let $({S_j}, f_{S_{j}})$ be a maximal $(k\overline{N_{G}(Q)}, c_j, \overline{N_{G}(Q)})$-Brauer pair.
Let  $\E_{j}$ be the fusion system of the block $k\overline{N_{G}(Q)}c_j$.
That is,
\[\E_{j}=\F_{({S_j}, f_{S_{j}})}(k\overline{N_{G}(Q)}, c_j, \overline{N_{G}(Q)}).\]
Then by Lemma \ref{group2fusionConj} we have 
\[\sum_{\bar{\sigma}/\overline{N_{G}(Q)}}(-1)^{|\bar{\sigma}|}l(k\overline{N_{G}(Q)}_{\bar{\sigma}}\Br_{\overline{Q_n}}(c_j))
=\sum_{\tau/\E_{j}}(-1)^{|\tau|}l(kN_{\overline{N_{G}(Q)}}({\tau})f_{n}),\]
where \[\tau\colon (\bar{1}, c_{j})<(\bar{Q}_{2}, f_{2})<(\bar{Q}_{3}, f_{3})<\cdots<(\bar{Q}_{n}, f_{n})\subseteq (S_{j}, {f}_{S_{j}}).\]
Therefore,
\[
\sum_{\sigma/\F}(-1)^{|\sigma|}l(kN_{G}(\sigma)e_{n})
=l(kGb)-\sum_{1\neq Q/G}\sum_{j=1}^{r}\sum_{\tau/\E_{j}}(-1)^{|\tau|}l(kN_{\overline{N_{G}(Q)}}({\tau})f_{n}).
\]

\end{proof}

Note that by \cite[Lemma 3.7]{KR}, $\sum_{\sigma/\F}(-1)^{|\sigma|}l(kN_{G}(\sigma)e_{n})=1$ if the defect group $\delta(b)=1$.
Now we can present a reformulation of Alperin's weight conjecture via block fusion systems.

 \begin{theorem}\label{equi}
 The following two statements are equivalent.
 
\begin{enumerate}
  \item Whenever $G$ is a finite group and $b$ is a block of $kG$, we have
 \[ l(kGb)=\sum_{\F^{cr}/\F}z(k\overline{N_G(Q, e_Q)e_Q}). \]
  \item Whenever $G$ is a finite group and $b$ is a block of $kG$  of positive defect, 
 then
 \[\sum_{\sigma/\F}(-1)^{|\sigma|}l(kN_{G}(\sigma)e_{n})=0.\]
Here $\F=\F_{(P, e_P )}(kG, b, G)$ is the fusion system of block $b$, 
 $(P, e_{P})$ is a maximal $(kG, b, G)$-Brauer pair, 
$(Q, e_{Q})\leq (P, e_{P})$, 
and $\sigma$ runs over a set of representatives for the conjugacy classes of chains in $\F$.
\end{enumerate}
\end{theorem}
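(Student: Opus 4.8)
The plan is to bootstrap the equivalence from Proposition~\ref{altsum}, the vanishing remark following it (from \cite[Lemma 3.7]{KR}), and the identification of the right-hand side of (1) with the number of $G$-orbits of $kGb$-weights. For a block $b$ of $kG$ with maximal Brauer pair $(P,e_P)$ and fusion system $\F=\F_{(P,e_P)}(kG,b,G)$ set
\[
\alpha(G,b)=\sum_{\sigma/\F}(-1)^{|\sigma|}l(kN_G(\sigma)e_n),\qquad
\omega(G,b)=\sum_{\F^{cr}/\F}z(k\overline{N_G(Q,e_Q)e_Q}).
\]
By the discussion preceding Conjecture~\ref{AlperinConj} together with \cite[Proposition 5.5, 5.6]{K}, $\omega(G,b)$ is the number of $G$-orbits of $kGb$-weights, hence also $\omega(G,b)=\sum_{Q/G}z(k\overline{N_G(Q)}\,\overline{\Br_Q(b)})$ with $Q$ running over the $G$-classes of $p$-subgroups. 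So (1) says $l(kGb)=\omega(G,b)$ for every $(G,b)$. Since $z(kGb)=1$ exactly when $b$ has trivial defect group, and $\alpha(G,b)=1$ in that same case (the remark after Proposition~\ref{altsum}), statement (2) is equivalent to $\alpha(G,b)=z(kGb)$ for every $(G,b)$.

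Next I would assemble two recursions. Proposition~\ref{altsum} reads
\[
\alpha(G,b)=l(kGb)-\sum_{1\neq Q/G}\sum_{j=1}^{r}\alpha\big(\overline{N_G(Q)},c_j\big),
\]
where $\overline{\Br_Q(b)}=\sum_j c_j$ is the block decomposition in $k\overline{N_G(Q)}=kN_G(Q)/Q$ and $\alpha(\overline{N_G(Q)},c_j)$ denotes the same alternating sum for $c_j$ with its block fusion system $\E_j$. Isolating the $Q=1$ term in the weight count and using additivity of $z$ over the orthogonal idempotents $c_j$ yields
\[
\omega(G,b)=z(kGb)+\sum_{1\neq Q/G}\sum_{j=1}^{r}z\big(k\overline{N_G(Q)}c_j\big).
\]
Subtracting, and abbreviating $D(H,c)=l(kHc)-\omega(H,c)$ and $E(H,c)=\alpha(H,c)-z(kHc)$, I get the master identity
\[
E(G,b)=D(G,b)-\sum_{1\neq Q/G}\sum_{j=1}^{r}E\big(\overline{N_G(Q)},c_j\big).\qquad(\dagger)
\]
The key structural remark is that $|\overline{N_G(Q)}|=|N_G(Q)|/|Q|\leq|G|/p<|G|$ whenever $Q\neq1$, so $(\dagger)$ expresses $E(G,b)$ in terms of $D(G,b)$ and of $E$-values at strictly smaller groups.

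The two implications are then immediate. Assuming (2), that is $E\equiv0$, the identity $(\dagger)$ collapses to $D(G,b)=0$ for every $(G,b)$, which is (1). Assuming (1), that is $D\equiv0$, one proves $E\equiv0$ by induction on $|G|$: if $b$ has trivial defect group then $\Br_Q(b)=0$ for all $Q\neq1$ and $(\dagger)$ gives $E(G,b)=D(G,b)=0$; if $b$ has nontrivial defect group, then every inner term $E(\overline{N_G(Q)},c_j)$ vanishes by the inductive hypothesis and $D(G,b)=0$ by assumption, so $(\dagger)$ again gives $E(G,b)=0$. In particular $\alpha(G,b)=z(kGb)=0$ for every block $b$ of positive defect, which is (2).

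The heavy lifting---transporting the Kn\"orr--Robinson alternating sum to chains of Brauer pairs and to descending chains of block fusion systems, the Morita equivalences converting $l(kG_\sigma\widehat{e_n})$ into $l(kN_G(\sigma)e_n)$, and the recursion itself---is already carried out in Lemma~\ref{group2fusionConj} and Proposition~\ref{altsum}. For Theorem~\ref{equi} the only points demanding care are the two elementary rewritings of $\omega$ and $\alpha$ as recursions, the well-foundedness of the induction (which is exactly why the reduction must pass through $N_G(Q)/Q$ rather than $N_G(Q)$), and the trivial-defect base case; I do not anticipate a genuine obstacle beyond verifying that (1) and (2) have been faithfully restated as the global identities $D\equiv0$ and $E\equiv0$.
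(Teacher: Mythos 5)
Your proposal is correct and is essentially the paper's argument: the paper's one-line proof (``by Proposition \ref{altsum} and \cite[Proposition 5.5]{K}'') is exactly the bootstrapping you carry out, combining the alternating-sum recursion of Proposition \ref{altsum} with the weight-count identification from \cite{K} and the defect-zero remark after Proposition \ref{altsum}. You have merely made explicit the separation of the $Q=1$ term and the induction on $|G|$ (through $N_G(Q)/Q$) that the paper leaves implicit.
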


\begin{proof}
This is followed by Proposition \ref{altsum} and \cite[Proposition 5.5]{K}.
\end{proof}

\begin{theorem}\label{radchain}
Let $\F=\F_{(P, e_P )}(kG, b, G)$ be the fusion system of block $b$ of  $kG$ with defect group $P$. Let $\F$ be a minimal counterexample of (2) of Theorem \ref{equi} with respect to the order of defect group.
Then  
 \[\sum_{\sigma/\F}(-1)^{|\sigma|}l(kN_{G}(\sigma)e_{n})=\sum_{\sigma\in\F^{cr}/\F}(-1)^{|\sigma|}l(k_{\alpha}\Out_{N_{\F}(\sigma)}(Q_{n})),\]
where $\sigma\in\F^{cr}/\F$ means that $\sigma$ runs over a set of representatives for the $\F$-conjugacy classes of centric-radical chains in $\F$, where $\alpha$ is the K\"ulshammer-Puig classes.
\end{theorem}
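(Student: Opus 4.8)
The plan is to reduce the alternating sum over all chains in $\F$ to one over centric-radical chains, and then on each such chain replace $l(kN_G(\sigma)e_n)$ by $l(k_\alpha\Out_{N_\F(\sigma)}(Q_n))$, where $\alpha$ is the relevant K\"ulshammer--Puig class. The reduction to centric-radical chains is the standard Kn\"orr--Robinson cancellation argument applied in the block-fusion setting. Concretely, for a chain $\sigma\colon 1<Q_1<\cdots<Q_n$ that is \emph{not} centric-radical, one finds a canonical way to insert or delete a subgroup: if $\sigma$ is not centric at some stage, or if some $Q_i$ fails to be radical in $N_\F(\sigma_{i-1})$, one pairs $\sigma$ with a chain of length $|\sigma|\pm1$ such that the corresponding terms $l(kN_G(\sigma)e_n)$ agree (because the relevant local subgroups and blocks coincide after the insertion/deletion), so the two contributions cancel in the alternating sum. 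This is the analogue of \cite[Proposition 3.6]{KR} combined with the structure theory of Section 3; the fully-normalized hypothesis on the chains and the recursive description $N_\F(\sigma_i)=N_{N_\F(\sigma_{i-1})}(Q_i)$ are exactly what make the pairing well-defined up to $\F$-conjugacy. The minimal-counterexample hypothesis enters here to guarantee that no genuinely new contribution survives from proper subpairs: for $Q\neq 1$, the inner sums $\sum_{\tau/\E_j}(-1)^{|\tau|}l(kN_{\overline{N_G(Q)}}(\tau)f_n)$ of Proposition \ref{altsum} vanish because each $\E_j$ is the fusion system of a block of smaller defect, so by minimality statement (2) of Theorem \ref{equi} holds for it (and for positive defect it is $0$, while for defect zero \cite[Lemma 3.7]{KR} gives the boundary term that is absorbed into the $l(kGb)$ bookkeeping).

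Once the sum is supported on $\F^{cr}/\F$, I would work chain by chain. Fix a centric-radical chain $\sigma$ with final term $Q_n$ and associated block $e_n$ of $G_\sigma=N_G(\sigma)$ (after the Morita reduction $l(kG_\sigma\widehat{e_n})=l(kN_G(\sigma)e_n)$ used in Lemma \ref{group2fusionConj}). Because $Q_n$ is $\F$-centric and $\F$-radical in the quotient sense along the chain, $C_G(Q_n)\le Q_nC_G(Q_n)$ collapses appropriately and $Q_n=O_p(N_{N_\F(\sigma_{n-1})}(Q_n))$ by Lemma \ref{equivalent}; as noted after Definition \ref{locrank}, for a longest chain one even has $Q_n=N_S(\sigma_{n-1})=N_S(\sigma_n)$. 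This is the condition under which the block $e_n$, viewed via $\overline{N_G(Q_n)}=N_G(Q_n)/Q_n$, has a Clifford-theoretic/extended-group description: $kN_G(\sigma)e_n$ is, up to Morita equivalence, a twisted group algebra of $\Out_{N_\F(\sigma)}(Q_n)=N_G(\sigma)/Q_nC_G(Q_n)$ over $k$, the twist being precisely the K\"ulshammer--Puig class $\alpha\in H^2(\Out_{N_\F(\sigma)}(Q_n);k^\times)$ attached to the block $e_n$ of $C_G(Q_n)$ and the action of $N_G(\sigma)$ on it. Hence $l(kN_G(\sigma)e_n)=l(k_\alpha\Out_{N_\F(\sigma)}(Q_n))$ for each centric-radical $\sigma$, and summing with signs gives the claimed identity.

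The main obstacle is the second step: identifying $l(kN_G(\sigma)e_n)$ with $l(k_\alpha\Out_{N_\F(\sigma)}(Q_n))$ uniformly for \emph{all} centric-radical chains, not just the terminal ones. The clean twisted-group-algebra picture is transparent when $C_G(Q_n)$ contributes nothing beyond $Z(Q_n)$ and $e_n$ is nilpotent on $C_G(Q_n)$, so that $kC_G(Q_n)e_n$ is Morita trivial and one is left with $k_\alpha[N_G(\sigma)/Q_nC_G(Q_n)]$; in general one must invoke the K\"ulshammer--Puig theory of blocks with normal defect-containing subgroups (see \cite{K}, \cite{L}) to extract the correct $\alpha$ and justify $l$ being insensitive to the Morita reduction, and one must check that the definition of $\alpha$ is compatible with the recursive passage from $N_\F(\sigma_{i-1})$ to $N_\F(\sigma_i)$ so that the cohomology class appearing is intrinsic to $(Q_n, e_n)$ inside $N_\F(\sigma)$. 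Handling this compatibility carefully — rather than the cancellation combinatorics, which is routine — is where the real work lies, and I would isolate it as a lemma before assembling the signed sum.
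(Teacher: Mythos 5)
Your overall two-step plan (first restrict the alternating sum to centric-radical chains, then rewrite each surviving term via a K\"ulshammer--Puig twisted group algebra) is the paper's plan, and your second step is essentially correct: since $Q_n$ is $\F$-centric, $\overline{e_n}$ is a defect-zero block of $C_G(Q_n)/Z(Q_n)$, which sits as a normal subgroup of $N_G(\sigma)/Q_n$ with $\overline{e_n}$ stable, and \cite[Theorem 4.8]{K} immediately gives a Morita equivalence between $k[N_G(\sigma)/Q_n]\overline{e_n}$ and $k_\alpha\Out_{N_\F(\sigma)}(Q_n)$; the ``compatibility along the recursion'' you isolate as the main obstacle is not an issue, because the class $\alpha$ is attached to the pair $(Q_n,e_n)$ together with the action of $N_G(\sigma)$, exactly as in the cited theorem.

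The genuine gap is in your first step. You offer an insertion/deletion pairing to cancel non-centric-radical chains, but no such purely combinatorial cancellation can handle the \emph{centric} condition (it does not change stabilizers or blocks), and if it did, the minimal-counterexample hypothesis in the statement would be superfluous. You also assert that for every $Q\neq 1$ the inner sums of Proposition \ref{altsum} vanish by minimality; that is false as stated, since when $Q$ is centric-radical some of the dominated blocks $c_j$ of $k\overline{N_G(Q)}\,\overline{\widehat{e_Q}}$ have defect zero and contribute $1$ each (these are precisely the surviving contributions, not something ``absorbed into the $l(kGb)$ bookkeeping''); taking both of your claims at face value would yield $\sum_{\sigma/\F}(-1)^{|\sigma|}l(kN_G(\sigma)e_n)=l(kGb)-\sum_{1\neq Q}z(\cdot)$, a different identity amounting to assuming the conjecture. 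The missing linchpin is \cite[Proposition 5.6]{K}: if $Q_1$ (or, recursively, $Q_i$ inside $N_\F(\sigma_{i-1})$, working in $G_{i-1}$ with the block $e_{i-1}$) is \emph{not} centric-radical, then \emph{every} block $c_j$ dominated by $\widehat{e_{Q_1}}$ in $k\overline{N_G(Q_1)}$ has positive defect, and by \cite[Chapter 5, Theorem 8.7]{NT} its defect group embeds in $\overline{N_P(Q_1)}$, of order strictly less than $|P|$; minimality of the counterexample then kills the corresponding inner alternating sums (via Proposition \ref{altsum} and Lemma \ref{group2fusionConj}), so exactly the chains with a non-centric-radical term drop out, stage by stage along the chain. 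Without this block-theoretic criterion tying ``centric-radical'' to the existence of defect-zero dominated blocks, your reduction does not go through.
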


\begin{proof}
Assume $Q_{1}$ is not $\F$-centric, or $Q_{1}$ is not $\F$-radical. 
Let $\overline{\widehat{e_{Q_{1}}}}=\sum_{j=1}^{r}{c_j}$ be the block decomposition of the image of the $N_{G}(Q_{1})$-orbit sum of $e_{Q_{1}}$ in $N_{G}(Q_{1})/Q_{1}$. Then each blocks $c_{j}$, which is dominated by $\widehat{e_{Q_{1}}}$, is positive defect by \cite[Proposition 5.6]{K}.
The defect group of block $c_j$ is $\overline{N_{G}(Q_{1})}$-conjugate to a subgroup of $\overline{N_{P}(Q_{1})}$ by \cite[Chapter 5, Theorem 8.7]{NT}.
So we have $|\overline{N_{P}(Q_{1})}|<|P|$. Then by the minimality of hypothesis, 
\[\sum_{\tau/\E_{j}}(-1)^{|\tau|}l(kN_{\overline{N_{G}(Q_{1})}}({\tau})f_{n})=0,\]
for all $1\leq j\leq r$.
Hence all chains beginning with $1<Q_{1}$ have no contribution to the sum.
Therefore, only the chain $\sigma\colon 1<Q_{1}<Q_{2}<\cdots<Q_{n}$ such that $Q_{1}\in \F^{cr}$ has potential contribution to the sum.
By induction, all chains beginning with $1<Q_{1}<Q_{2}<\cdots<Q_{i}$ have potential contribution to the sum only if 
$z(kN_{G_{i-1}}(Q_{i},e_{i})e_{i}/Q_{i})\neq 0$.
Then we have $Q_{i}\in N_{\F}(\sigma_{i-1})^{cr}$ by \cite[Proposition 5.6]{K}.
Hence, only the centric-radical chains $\sigma\colon 1<Q_{1}<Q_{2}<\cdots<Q_{n}$, i.e., $Q_{i}\in N_{\F}(\sigma_{i-1})^{cr}$ for $1\leq i\leq n$, have potential contribution to the sum. Thus we have
\[\sum_{\sigma/\F}(-1)^{|\sigma|}l(kN_{G}(\sigma)e_{n})=\sum_{\sigma\in\F^{cr}/\F}(-1)^{|\sigma|}l(kN_{G}(\sigma)e_{n}).\]

Since we have $Q_{n}\in \F^{c}$, it follows that $\overline{e_{n}}$ is a block of $C_{G}(Q_{n})/Z(Q_{n})$ with defect group 1. Note that $C_{G}(Q_{n})/Z(Q_{n})$ is normal in $N_{G}(\sigma)/Q_{n}$ and $\overline{e_{n}}$ is $N_{G}(\sigma)/Q_{n}$-stable. Then by \cite[Theorem 4.8]{K},
$kN_{G}(\sigma)/Q_{n}\overline{e_{n}}$ is Morita equivalent to $k_{\alpha}\Out_{N_{\F}(\sigma)}(Q_{n})$, where $\alpha$ is the K\"ulshammer-Puig classes, see \cite[Section IV. Section 5.5]{AKO} or \cite[Section 4.4]{K}. So we have $l(k\overline{N_{G}(\sigma)e_{n}})=l(k_{\alpha}\Out_{N_{\F}(\sigma)}(Q_{n}))$ and then $l(kN_{G}(\sigma)e_{n})=l(k_{\alpha}\Out_{N_{\F}(\sigma)}(Q_{n}))$.
Therefore,
 \[\sum_{\sigma/\F}(-1)^{|\sigma|}l(kN_{G}(\sigma)e_{n})=\sum_{\sigma\in\F^{cr}/\F}(-1)^{|\sigma|}l(k_{\alpha}\Out_{N_{\F}(\sigma)}(Q_{n})).\]
\end{proof}

By Theorem \ref{radchain}, Theorem \ref{equi} can be stated in the framework of block fusion systems.

\begin{theorem}\label{equi2}
The following two statements are equivalent.
 
\begin{enumerate}
  \item Whenever $G$ is a finite group and $b$ is a block of $kG$, we have
 \[ l(kGb)=\sum_{\F^{cr}/\F}z(k_{\alpha}\Out_{\F}(Q)). \]
  \item Whenever $\F$ is a fusion system of block of positive defect, 
 then
 \[\sum_{\sigma\in\F^{cr}/\F}(-1)^{|\sigma|}l(k_{\alpha}\Out_{N_{\F}(\sigma)}(Q_{n}))=0,\]
where $\sigma\in\F^{cr}/\F$ means that $\sigma$ runs over a set of representatives for the $\F$-conjugacy classes of centric-radical chains in $\F$, where $\alpha$ is the K\"ulshammer-Puig classes.

\end{enumerate}
\end{theorem}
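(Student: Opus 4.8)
The plan is to bootstrap from Theorem~\ref{equi}, using the chain reduction of Theorem~\ref{radchain} together with the K\"ulshammer--Puig Morita equivalence to transport both of its statements from the group side to the fusion-system side.

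I would first dispose of statement~(1). If $Q\in\F^{cr}$ then $Q$ is in particular $\F$-centric, so $\overline{e_Q}$ is a block of $C_G(Q)/Z(Q)$ of defect zero which is normal and stable in $N_G(Q,e_Q)/Q$; exactly as in the last paragraph of the proof of Theorem~\ref{radchain} (applied to the one-term chain $Q$, for which $\Aut_{N_\F(Q)}(Q)=\Aut_\F(Q)$ and hence $\Out_{N_\F(Q)}(Q)=\Out_\F(Q)$), \cite[Theorem 4.8]{K} yields a Morita equivalence $\Mod(k\overline{N_G(Q,e_Q)e_Q})\approx\Mod(k_\alpha\Out_\F(Q))$. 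Since a Morita equivalence preserves both the number of isomorphism classes of simple modules and the number of projective simple ones, $z(k\overline{N_G(Q,e_Q)e_Q})=z(k_\alpha\Out_\F(Q))$. Summing over $\F$-classes of centric--radical subgroups of $P$ shows that statement~(1) of this theorem is literally statement~(1) of Theorem~\ref{equi}.

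It therefore suffices to prove that statement~(2) here is equivalent to statement~(2) of Theorem~\ref{equi}, since the result will then follow from Theorem~\ref{equi}. For the forward direction, assume statement~(2) of Theorem~\ref{equi} holds for every block of positive defect and let $\F=\F_{(P,e_P)}(kG,b,G)$ be such a block fusion system. Then every block of strictly smaller defect also satisfies statement~(2) of Theorem~\ref{equi}, so the argument in the proof of Theorem~\ref{radchain} carries over without change --- the only appeal to minimality there is to conclude $\sum_{\tau/\E_j}(-1)^{|\tau|}l(kN_{\overline{N_G(Q_1)}}(\tau)f_n)=0$ for the blocks $c_j$ dominated by $\widehat{e_{Q_1}}$, and those $c_j$ have nontrivial defect group of order strictly less than $|P|$ --- and it yields
\[
\sum_{\sigma/\F}(-1)^{|\sigma|}l(kN_{G}(\sigma)e_{n})=\sum_{\sigma\in\F^{cr}/\F}(-1)^{|\sigma|}l(k_{\alpha}\Out_{N_{\F}(\sigma)}(Q_{n})).
\]
The left-hand side is $0$ by hypothesis, hence so is the right-hand side, which is statement~(2) here. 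For the reverse direction, assume statement~(2) here holds for every fusion system of a block of positive defect and suppose for contradiction that statement~(2) of Theorem~\ref{equi} fails for some block; choose a counterexample whose defect group $P$ has minimal order. Then this block is a minimal counterexample in the sense of Theorem~\ref{radchain}, so that theorem supplies the displayed identity above; the right-hand side vanishes by our assumption, hence the left-hand side vanishes, contradicting the choice of counterexample. Combining these, statement~(1) here is equivalent to statement~(1) of Theorem~\ref{equi}, which by Theorem~\ref{equi} is equivalent to statement~(2) of Theorem~\ref{equi}, which we have just shown is equivalent to statement~(2) here.

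The step demanding the most care is the forward direction of the equivalence for statement~(2): because Theorem~\ref{radchain} is phrased only for a minimal counterexample, one must verify that its proof uses nothing beyond the validity of statement~(2) of Theorem~\ref{equi} for blocks of strictly smaller defect, and in particular that the auxiliary blocks $c_j$ occurring there genuinely have nontrivial defect group, so that the exceptional defect-zero case --- where the alternating sum equals $1$ rather than $0$, by \cite[Lemma 3.7]{KR} --- does not interfere. The remaining points are routine but should be stated explicitly: the Fong--Reynolds identification of $z(k\overline{N_G(Q)}\overline{\widehat{e_Q}})$ with $z(k\overline{N_G(Q,e_Q)e_Q})$ recorded in the discussion following Conjecture~\ref{AlperinConj}, the fact that the Morita equivalences in play preserve $z$ as well as $l$, and the identity $\Out_{N_\F(Q)}(Q)=\Out_\F(Q)$ for the one-term chain $Q$.
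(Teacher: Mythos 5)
Your argument is correct and follows essentially the same route as the paper: both deduce the theorem from Theorem \ref{equi} by using Theorem \ref{radchain} to equate the group-theoretic alternating sum with the fusion-theoretic one (the paper phrases this as an induction on the order of the defect group, which amounts to your two directions via the global hypothesis and a minimal counterexample). Your write-up is in fact slightly more explicit than the paper's, in identifying statement (1) with statement (1) of Theorem \ref{equi} through the K\"ulshammer--Puig Morita equivalence and in checking that the proof of Theorem \ref{radchain} only uses the vanishing for blocks of strictly smaller positive defect, so it can be applied outside the minimal-counterexample setting.
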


\begin{proof}
We claim that (2) of Theorem \ref{equi} is equivalent to (2) of Theorem \ref{equi2}.
The proof proceed by induction on the order of defect group.
When $P$ is of order $p$, it is obviously that  
\[\sum_{\sigma/\F}(-1)^{|\sigma|}l(kN_{G}(\sigma)e_{n})=\sum_{\sigma\in\F^{cr}/\F}(-1)^{|\sigma|}l(k_{\alpha}\Out_{N_{\F}(\sigma)}(Q_{n})).\] 
Suppose that assertions are both known to hold for defect groups of order less than $|P|$.
By Theorem \ref{radchain},
\[\sum_{\sigma/\F}(-1)^{|\sigma|}l(kN_{G}(\sigma)e_{n})=\sum_{\sigma\in\F^{cr}/\F}(-1)^{|\sigma|}l(k_{\alpha}\Out_{N_{\F}(\sigma)}(Q_{n})).\] 
Hence we see that assertion (2) of Theorem \ref{equi} holds if and only if  assertion (2) of Theorem \ref{equi2} holds.
 
 \end{proof}

\begin{remark}
Theorem \ref{equi2} shows that the local rank defined in Definition \ref{locrank} should have the honor of being called the local rank of fusion system,
which is useful to reformulate Alperin's weight conjecture via alternating sums in the context of fusion systems. 
\end{remark}

\begin{remark}
The assertion that \[\sum_{\sigma\in\F^{cr}/\F}(-1)^{|\sigma|}l(k_{\alpha}\Out_{N_{\F}(\sigma)}(Q_{n}))=0\] for any saturated fusion system $\F$ over nontrivial finite $p$-group $P$ could be viewed as  Alperin's weight conjecture for fusion systems.
\end{remark}

\begin{remark}
Let $\F$ be a minimal counterexample of (2) of Theorem \ref{equi} with respect to the order of defect group. Then $O_{p}(\F)=1$.
\end{remark}
\begin{proof}
Suppose that $Q=O_{p}(\F)>1$. Note that $N_{\F}(Q)=\F_{(P,e_{P})}(kN_{G}(Q)\widehat{e_{Q}}, N_{G}(Q))$, and $N_{\F}(Q)=\F$.
By Theorem \ref{radchain}, we have
 \[\sum_{\sigma/\F}(-1)^{|\sigma|}l(kN_{G}(\sigma)e_{n})=
 \sum_{\sigma\in\F^{cr}/\F}(-1)^{|\sigma|}l(k_{\alpha}\Out_{N_{\F}(\sigma)}(Q_{n})),\]
and
 \[\sum_{\sigma/\F}(-1)^{|\sigma|}l(kN_{N_{G}(Q)}(\sigma)e_{n})=\sum_{\sigma\in\F^{cr}/\F}(-1)^{|\sigma|}l(k_{\alpha}\Out_{N_{\F}(\sigma)}(Q_{n})).\]
By Lemma \ref{group2fusionConj}, we have 
\[
\sum_{\sigma/\F}(-1)^{|\sigma|}l(kN_{N_{G}(Q)}(\sigma)e_{n})
=\sum_{\sigma/N_{G}(Q)}(-1)^{|\sigma|}l(kN_{G}(Q)_{\sigma}\Br_{Q_n}(\widehat{e_{Q}})).
\]
Hence,
\[
\sum_{\sigma/\F}(-1)^{|\sigma|}l(kN_{G}(\sigma)e_{n})
=\sum_{\sigma/N_{G}(Q)}(-1)^{|\sigma|}l(kN_{G}(Q)_{\sigma}\Br_{Q_n}(\widehat{e_{Q}})).
\]
Since $O_{p}(N_{G}(Q))\geq Q>1$, it follows that $\sum_{\sigma/N_{G}(Q)}(-1)^{|\sigma|}l(kN_{G}(Q)_{\sigma}\Br_{Q_n}(\widehat{e_{Q}}))=0$ by \cite[Proposition 3.7]{D}.
Therefore, $\sum_{\sigma/\F}(-1)^{|\sigma|}l(kN_{G}(\sigma)e_{n})=0$, which contradicts with the assumption that $\F$ is a  counterexample of (2) of Theorem \ref{equi}.
Thus $O_{p}(\F)=1$.
\end{proof}

\begin{remark}
Let $\F=\F_{(P, e_P )}(kG, b, G)$ be a fusion system of block $b$ of $kG$ with defect group $P$. Assume that $O_{p}(\F)\in \F^{cr}$.
Then 
\[
 \sum_{\sigma\in\F^{cr}/\F}(-1)^{|\sigma|}l(k_{\alpha}\Out_{N_{\F}(\sigma)}(Q_{n}))=
 l(kGb)-l(k_{\alpha}\Out_{\F}(O_{p}(\F))).
\]
\end{remark}

\begin{proof}
Let $Q=O_{p}(\F)$, and let $\sigma\colon 1<Q_{1}<Q_{2}<\cdots<Q_{n}$ be a centric-radical chain. Then we have $Q_{1}\in\F^{cr}$ and $Q\leq Q_{1}$.
Assume that  $\sigma'\colon1<Q_{2}<\cdots<Q_{n}$ if $Q=Q_{1}$ and
$\sigma'\colon1<Q<Q_{1}<Q_{2}<\cdots<Q_{n}$ if $Q<Q_{1}$.
Note that $N_{\F}(\sigma)=N_{\F}(\sigma')$ for all chain of $n>1$ or $n=1$ and $Q<Q_{1}$.
We can remove all the orbits of such pairs from $\sigma\in\F^{cr}/\F$, 
so the only remaining orbits are $1$ and $1<Q$.
Thus the result follows.
\end{proof}

\section{fusion systems with small rank}
In this section, 
we give necessary and sufficient conditions for a fusion system to have (weakly) local rank one or two.
Firstly we investigate the fusion systems of weakly local rank $\mathrm{rank}_{w}\leq 2$.
Let $\F$ be a saturated fusion system over a finite $p$-group $S$.
By definition, $\mathrm{rank}_{w}(\F)=0$ if and only if  $O_p(\F)=S$.

\begin{proposition}\label{rank1}
Let $\F$ be a saturated fusion system over a finite $p$-group $S$.
Then  $\mathrm{rank}_{w}(\F)=1$ if and only if  $\F^e=\{O_p(\F)\}$.
\end{proposition}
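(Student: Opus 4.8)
The plan is to unwind the recursive definition of $\mathrm{rank}_{w}(\F)$ together with the recursive definition of $\F^{f\mathfrak{cr}}$ and then invoke Alperin's fusion theorem (Theorem \ref{fusthm}). First observe that $\mathrm{rank}_{w}(\F)\geq 1$ forces $O_p(\F)<S$, so throughout we may assume $O_p(\F)<S$; under this assumption $S$ itself is a canonical weakly centric-radical subgroup (since $(S,N_\F(S))$ is maximal in $\mathfrak P$), and the chain $O_p(\F)<S$ witnesses $\mathrm{rank}_{w}(\F)\geq 1$. So the content is to show $\mathrm{rank}_{w}(\F)=1$ — i.e.\ no canonical weakly centric-radical subgroup $R$ has $\mathrm{rank}_{w}(N_\F(R))\geq 1$ — is equivalent to $\F^e=\{O_p(\F)\}$.

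For the direction $\F^e=\{O_p(\F)\}\Rightarrow \mathrm{rank}_{w}(\F)=1$, I would argue by contradiction: if $\mathrm{rank}_{w}(\F)\geq 2$, there is a canonical $R\in\F^{f\mathfrak{cr}}$ (so $O_p(\F)<R<S$ with $O_p(N_\F(R))=R$) such that $\mathrm{rank}_{w}(N_\F(R))\geq 1$, hence $O_p(N_\F(R))=R<N_S(R)$. Since $\F^e=\{O_p(\F)\}$, Alperin's fusion theorem gives $\F=\langle\Aut_\F(S)\rangle\cdot$(inclusions), i.e.\ every $\F$-morphism is realised inside $\Aut_\F(S)$; in particular $\Aut_\F(S)$ controls fusion, and one deduces $O_p(\F)=\bigcap_{\varphi\in\Aut_\F(S)}\varphi(\text{something})$ — more usefully, that $R\lhd S$ would have to be $\Aut_\F(S)$-invariant and then $N_\F(R)=\F$ would be a fusion system over $S$ with $R=O_p(N_\F(R))=O_p(\F)<R$, a contradiction; the case $R\not\lhd S$ is excluded because a weakly centric-radical subgroup is normal in its $N_S$-closure only when $R<N_S(R)$, which combined with $R\in\F^f$ and control of fusion by $\Aut_\F(S)$ again forces $R\lhd S$. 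This is the step I expect to be the main obstacle: extracting from ``$\F$ is generated by $\Aut_\F(S)$'' the precise structural statement that rules out a proper canonical $R$ with $O_p(N_\F(R))=R$; I would look to mimic the standard fact that when $\F$ has no essential subgroups, $O_p(\F)$ is the unique subgroup $R$ with $O_p(N_\F(R))=R$, $R\in\F^f$, apart from $S$ — i.e.\ the poset $\mathfrak P$ has (up to conjugacy) only the two maximal elements $(O_p(\F),\F)$ and $(S,N_\F(S))$.

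For the converse $\mathrm{rank}_{w}(\F)=1\Rightarrow \F^e=\{O_p(\F)\}$, suppose $Q\in\F^e$ with $Q\neq O_p(\F)$. An essential subgroup is fully $\F$-normalised and $\F$-centric, and $\Out_\F(Q)$ has a strongly $p$-embedded subgroup, which forces $O_p(\Out_\F(Q))=1$, so $Q$ is $\F$-radical; hence $Q\in\F^{fcr}$ and by Lemma \ref{equivalent} $O_p(N_\F(Q))=Q$, so $Q\in\F^{f\mathfrak{cr}}$. Moreover $Q$ essential implies $Q$ is not normal in $S$, so $Q<N_S(Q)\leq S$, and $Q\neq S$; since $O_p(\F)\leq Q$ by the Corollary following Lemma \ref{inclusionparabolic}, we in fact need $O_p(\F)<Q<S$, i.e.\ $Q$ is canonical (if $O_p(\F)=Q$ we are in the excluded case). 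Then $N_\F(Q)$ is a proper parabolic subsystem; I claim $\mathrm{rank}_{w}(N_\F(Q))\geq 1$, which gives $\mathrm{rank}_{w}(\F)\geq 2$, a contradiction. For the claim, note $O_p(N_\F(Q))=Q$ while the underlying group of $N_\F(Q)$ is $N_S(Q)>Q$, so $O_p(N_\F(Q))<N_S(Q)$, which by the case-(1) clause of the recursive definition of $\mathrm{rank}_{w}$ forces $\mathrm{rank}_{w}(N_\F(Q))\geq 1$. (This last implication is immediate: $\mathrm{rank}_{w}(\E)=0 \iff O_p(\E)$ equals the full group.) Assembling the two directions, together with the reduction to $O_p(\F)<S$, completes the proof.
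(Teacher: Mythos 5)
Your reverse direction ($\mathrm{rank}_{w}(\F)=1\Rightarrow\F^e=\{O_p(\F)\}$) is essentially the paper's argument: an essential $Q$ lies in $\F^{f\mathfrak{cr}}$ by Lemma \ref{equivalent}, and if $O_p(\F)<Q$ then $O_p(N_\F(Q))=Q<N_S(Q)$ forces $\mathrm{rank}_{w}(\F)\geq 2$. (Your side remark that an essential subgroup cannot be normal in $S$ is false --- the normal Klein four subgroup of $D_8$ is essential in $\F_{D_8}(S_4)$ --- but you only need $Q<S$, which holds because $\Out_\F(S)$ is a $p'$-group so $S$ is never essential, and then $Q<N_S(Q)$ is automatic in a $p$-group.)

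The forward direction, which you yourself flag as the main obstacle, has a genuine gap. From $\F^e=\{O_p(\F)\}$ Alperin's fusion theorem gives $\F=\langle\Aut_\F(O_p(\F)),\Aut_\F(S)\rangle$, not $\F=\langle\Aut_\F(S)\rangle$: when $O_p(\F)$ is essential it carries $\F$-automorphisms that do not extend to $S$ (again $\F_{D_8}(S_4)$, where $O_2(\F)=V_4$ is the unique essential subgroup, $\Aut_\F(V_4)\cong S_3$, yet $\Aut_\F(S)$ is a $2$-group), so ``$\Aut_\F(S)$ controls fusion'' is not available. Even granting it, your case analysis is asserted rather than proved: no reason is given why a normal $R$ must be $\Aut_\F(S)$-invariant, and the sentence excluding non-normal $R$ does not amount to an argument; note also that the recursion defining $\mathrm{rank}_{w}$ ranges over all $R\in\F^{\mathfrak{cr}}$ with $O_p(\F)<R$, not only fully normalised ones, so your reduction to $R\in\F^{f\mathfrak{cr}}$ would itself need justification. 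The paper's mechanism treats every $Q$ with $O_p(\F)<Q<S$ uniformly and avoids any invariance claim: a morphism of $N_\F(Q)$ extends by definition to some $\tilde{\varphi}\colon UQ\to VQ$ with $\tilde{\varphi}(Q)=Q$, and since $|UQ|\geq |Q|>|O_p(\F)|$ the Alperin decomposition of $\tilde{\varphi}$ can only involve restrictions of elements of $\Aut_\F(S)$ (restrictions of $\Aut_\F(O_p(\F))$ live on subgroups of $O_p(\F)$); hence $N_\F(Q)\leq N_\F(S)$, and Lemma \ref{inclusionparabolic} (with $R=Q$, $T=S$) gives $N_S(Q)\lhd N_\F(Q)$, so $O_p(N_\F(Q))=N_S(Q)>Q$ and $Q\notin\F^{\mathfrak{cr}}$. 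Consequently the only weakly centric-radical chain beyond $O_p(\F)$ passes through $S$ and $\mathrm{rank}_{w}(\F)=1$. You need some version of this extension argument to close your ``main obstacle''; as written, the key step is missing.
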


\begin{proof}
If $\mathrm{rank}_{w}(\F)=1$, then $O_p(\F)<S$.
Hence the set $\F^e$ of essential subgroups of $\F$ is not empty, 
otherwise, $O_p(\F)=S$ by Alperin's fusion theorem.
Let $Q\in \F^e$.
Then $O_p(\F)\leq Q<S$.
We claim that an $\F$-essential subgroup $Q$ is in $\F^{f\mathfrak{cr}}$.
In fact, since an $\F$-essential subgroup $Q$ is fully $\F$-normalised, $\F$-centric and $\F$-radical, 
it follows that $O_p(N_\F(Q))=Q$ by Lemma \ref{equivalent}.
Therefore, $Q\in\F^{f\mathfrak{cr}}$.
Suppose that $O_p(\F)<Q$.
Then $O_p(\F)< Q<N_S(Q)$ is a normalised weakly centric-radical chain in $\F$.
This is a contradiction.
Thus $Q=O_p(\F)$ and $\F^e=\{O_p(\F)\}$.

Now we assume that $\F^e=\{O_p(\F)\}$.
We claim that $O_p(\F)<S$ is the longest normalised weakly centric-radical chain in $\F$.
So that $\mathrm{rank}_{w}(\F)=1$.
In fact, $Q$ is not in $\F^{\mathfrak{cr}}$ whenever $O_p(\F)< Q<S$.
Since $\F^e=\{O_p(\F)\}$, by Alperin's fusion theorem, we have  $\F=\langle \Aut_\F(O_p(\F)), \Aut_\F(S)\rangle$.
Hence $N_\F(Q)\leq N_\F(S)$ if $O_p(\F)< Q<S$.
Thus $O_p(N_\F(Q))=N_S(Q)>Q$ and $Q\notin\F^{\mathfrak{cr}}$ as claimed.
\end{proof}

\begin{proposition}\label{rank2}
Let $\F$ be a saturated fusion system over a finite $p$-group $S$.
Then  $\mathrm{rank}_{w}(\F)=2$ if and only if
$\F^{e}\setminus\{O_p(\F)\}\neq \emptyset$ and
$N_{\F}(Q)^{\mathfrak{cr}}\setminus\{Q, N_{S}(Q)\}=\emptyset$ whenever $Q\in\F^{\mathfrak{cr}}\setminus\{S, O_p(\F)\}$.
\end{proposition}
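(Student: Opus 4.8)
The plan is to prove both directions of the biconditional by unwinding the recursive definition of $\mathrm{rank}_{w}$. Recall $\mathrm{rank}_{w}(\F)=1+\max\{\mathrm{rank}_{w}(N_\F(R))\mid R\in\F^{\mathfrak{cr}},\ O_p(\F)<R\}$ when $O_p(\F)<S$, and $\mathrm{rank}_{w}(\E)=0$ exactly when $O_p(\E)=O_S$ for $\E$ over $O_S$. So $\mathrm{rank}_{w}(\F)=2$ means: (i) $O_p(\F)<S$, so by Alperin's fusion theorem $\F^e\neq\emptyset$; (ii) there exists a canonical $\F^{\mathfrak{cr}}$-subgroup $R$ with $\mathrm{rank}_{w}(N_\F(R))=1$; and (iii) no such $R$ gives $\mathrm{rank}_{w}(N_\F(R))\geq 2$. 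I would first use Proposition \ref{rank1} applied to $N_\F(R)$: $\mathrm{rank}_{w}(N_\F(R))=1$ iff $N_\F(R)^e=\{O_p(N_\F(R))\}=\{R\}$ (using that $O_p(N_\F(R))=R$ since $R\in\F^{\mathfrak{cr}}$), and $\mathrm{rank}_{w}(N_\F(R))=0$ iff $R=N_S(R)$, i.e. $R=S$, which is impossible for canonical $R$. Thus condition (iii) rephrases as: every canonical $R\in\F^{\mathfrak{cr}}$ satisfies $\mathrm{rank}_{w}(N_\F(R))=1$, i.e. $N_\F(R)^{\mathfrak{cr}}$ contains no proper canonical subsystem — which by the argument in Proposition \ref{rank1} amounts to $N_\F(R)^{\mathfrak{cr}}\setminus\{R, N_S(R)\}=\emptyset$.

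For the forward direction, I would argue: if $\mathrm{rank}_{w}(\F)=2$ then $O_p(\F)<S$ so $\F^e\neq\emptyset$; as in Proposition \ref{rank1}, each $Q\in\F^e$ lies in $\F^{f\mathfrak{cr}}$ (via Lemma \ref{equivalent}), and if every essential subgroup equalled $O_p(\F)$ then by the rank-$1$ criterion $\mathrm{rank}_{w}(\F)=1$, a contradiction; hence $\F^e\setminus\{O_p(\F)\}\neq\emptyset$. Next, for any canonical $Q\in\F^{\mathfrak{cr}}$, the chain $O_p(\F)<Q$ extends to a weakly centric-radical chain in $\F$, so $1+\mathrm{rank}_{w}(N_\F(Q))\leq\mathrm{rank}_{w}(\F)=2$, giving $\mathrm{rank}_{w}(N_\F(Q))\leq 1$; since $Q\neq N_S(Q)$ (as $Q<S$, and $Q\lhd N_S(Q)$ with $Q$ proper implies $Q<N_S(Q)$) we get $O_p(N_\F(Q))=Q\neq N_S(Q)$ so $\mathrm{rank}_{w}(N_\F(Q))\neq 0$, hence $=1$, and then Proposition \ref{rank1} applied inside $N_\F(Q)$ yields $N_\F(Q)^{\mathfrak{cr}}\setminus\{Q,N_S(Q)\}=\emptyset$. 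Here I use that $O_p(N_\F(Q)) = Q$, which follows from $Q\in\F^{\mathfrak{cr}}$ by definition, and that the maximal subgroup of a fusion system restricted appropriately behaves like the ``$O_p$'' of the subsystem.

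For the reverse direction, assume $\F^e\setminus\{O_p(\F)\}\neq\emptyset$ and the stated condition. Pick $Q\in\F^e$ with $Q\neq O_p(\F)$; then $Q\in\F^{f\mathfrak{cr}}$ and $O_p(\F)<Q<N_S(Q)$ (strict since $Q\neq S$), so $O_p(\F)<Q<N_S(Q)$ is a weakly centric-radical chain of length $2$, forcing $\mathrm{rank}_{w}(\F)\geq 2$; also $O_p(\F)<S$. For the upper bound $\mathrm{rank}_{w}(\F)\leq 2$ I would take a longest weakly centric-radical chain $\sigma\colon O_p(\F)<Q_1<\cdots<Q_n$ and show $n\leq 2$: here $Q_1\in\F^{\mathfrak{cr}}$ is canonical, so the hypothesis gives $N_\F(Q_1)^{\mathfrak{cr}}\setminus\{Q_1,N_S(Q_1)\}=\emptyset$, hence by the rank-$1$ analysis $\mathrm{rank}_{w}(N_\F(Q_1))\leq 1$, and since the tail $Q_1<Q_2<\cdots<Q_n$ is a weakly centric-radical chain in $N_\F(Q_1)$ of length $n-1$ we get $n-1\leq\mathrm{rank}_{w}(N_\F(Q_1))\leq 1$, so $n\leq 2$. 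Combining, $\mathrm{rank}_{w}(\F)=2$.

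The main obstacle I anticipate is bookkeeping the interplay between ``$O_p$ of a normaliser subsystem'' and the $\F^{\mathfrak{cr}}$ condition one level down — specifically justifying cleanly that $N_\F(Q_1)^{\mathfrak{cr}}\setminus\{Q_1, N_S(Q_1)\}=\emptyset$ is exactly equivalent to $\mathrm{rank}_{w}(N_\F(Q_1))\leq 1$, which requires reapplying Propositions \ref{rank1} (and the rank-$0$ case) inside $N_\F(Q_1)$ with $O_p(N_\F(Q_1))=Q_1$ playing the role of $O_p$ there, and confirming that a ``canonical'' subgroup of $N_\F(Q_1)$ in the sense used in Proposition \ref{rank1} is precisely an element of $N_\F(Q_1)^{\mathfrak{cr}}$ strictly between $Q_1$ and $N_S(Q_1)$. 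The rest is a direct transcription of the chain-extension bookkeeping already used in the rank-$1$ case and in the Proposition preceding it about $\mathrm{rank}_{w}$ of local subsystems.
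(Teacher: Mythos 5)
Your proposal is correct and follows essentially the same route as the paper: both directions hinge on Proposition \ref{rank1} for the essential-subgroup condition and on the correspondence between weakly centric-radical chains of length at least $2$ and subgroups of $N_\F(Q)^{\mathfrak{cr}}$ strictly between $Q$ and $N_S(Q)$. The only cosmetic difference is that where you pass through $\mathrm{rank}_{w}(N_\F(Q))=1$ and $N_\F(Q)^e=\{Q\}$ by a second application of Proposition \ref{rank1}, the paper argues directly that any $P\in N_\F(Q)^{\mathfrak{cr}}$ with $Q<P<N_S(Q)$ would extend $O_p(\F)<Q$ to a weakly centric-radical chain of length $3$, contradicting $\mathrm{rank}_{w}(\F)=2$.
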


\begin{proof}
Suppose that $\mathrm{rank}_{w}(\F)=2$.
Then $\F^{e}\setminus\{O_p(\F)\}\neq \emptyset$ by Proposition \ref{rank1}.
Assume $Q\in\F^{\mathfrak{cr}}\setminus\{S, O_p(\F)\}$.
Then $O_{p}(\F)<Q<N_{S}(Q)$ is one of the longest weakly centric-radical chain in $\F$ as $\mathrm{rank}_{w}(\F)=2$.
So $O_p(N_{N_{\F}(Q)}(P))>P$ for any $Q<P<N_{S}(Q)$.
Hence $N_{\F}(Q)^{\mathfrak{cr}}\setminus\{Q, N_{S}(Q)\}=\emptyset$.

Assume that $\F^{e}\setminus\{O_p(\F)\}\neq \emptyset$.
Let $P\in\F^{e}\setminus\{O_p(\F)\}$. 
Then $O_{p}(\F)<P<N_{S}(P)$ is a normalised centric-radical chain in $\F$.
Thus $\mathrm{rank}_{w}(\F)\geq 2$.
For each $Q\in\F^{\mathfrak{cr}}\setminus\{S, O_p(\F)\}$, 
By hypothesis, we have $N_{\F}(Q)^{\mathfrak{cr}}\setminus\{Q, N_{S}(Q)\}=\emptyset$.
Thus $O_{p}(\F)<Q<N_{S}(Q)$ is one of the longest weakly centric-radical chain in $\F$.
Hence $\mathrm{rank}_{w}(\F)=2$.
\end{proof}

Now we consider the fusion systems $\F$ of local rank $\rank(\F)\leq 2$.

\begin{proposition}\label{rr1}
Let $\F$ be a saturated fusion system over a finite $p$-group $S$.
Then  $\rank(\F)=1$ if and only if  $\F^e=\{O_p(\F)\}$.
\end{proposition}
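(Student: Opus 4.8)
The plan is to mirror the argument for Proposition~\ref{rank1}, showing that $\rank(\F)=1$ is equivalent to the same condition $\F^e=\{O_p(\F)\}$, and in fact that the two local-rank notions agree at the value $1$. Note first that by definition $\rank(\F)=0$ precisely when $O_p(\F)=S$, and recall from the excerpt that $\rank(\F)\leq \mathrm{rank}_w(\F)$ always. So if $\rank(\F)=1$, then certainly $O_p(\F)<S$; if moreover $\F^e\neq\{O_p(\F)\}$, pick an essential subgroup $Q$ with $O_p(\F)<Q<S$. As in the proof of Proposition~\ref{rank1}, $Q$ is fully $\F$-normalised, $\F$-centric and $\F$-radical, so $Q\in\F^{fcr}$ by Lemma~\ref{equivalent}, and then $O_p(\F)<Q<N_S(Q)$ is a normalised centric-radical chain of length $2$ in $\F$ (using $Q<N_S(Q)$ since $Q$ is a proper subgroup of the $p$-group $S$, so $Q$ is properly contained in its normaliser). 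This forces $\rank(\F)\geq 2$, a contradiction; hence $\F^e=\{O_p(\F)\}$.

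For the converse, assume $\F^e=\{O_p(\F)\}$. Since $O_p(\F)<S$ (as $\F^e$ is nonempty it cannot be that $O_p(\F)=S$; more carefully, if $O_p(\F)=S$ then $\rank(\F)=0$ and the hypothesis $\F^e=\{O_p(\F)\}=\{S\}$ is vacuously about essential subgroups, which by definition are proper in $S$, so $\F^e=\emptyset\neq\{S\}$ — thus $O_p(\F)<S$), we have $\rank(\F)\geq 1$. It remains to show $\rank(\F)\leq 1$, i.e.\ that no subgroup $R$ with $O_p(\F)<R$ lies in $\F^{fcr}$. By Alperin's fusion theorem (Theorem~\ref{fusthm}), $\F=\langle\Aut_\F(O_p(\F)),\Aut_\F(S)\rangle$, so for any $Q$ with $O_p(\F)<Q<S$ we get $N_\F(Q)\leq N_\F(S)$ and hence $O_p(N_\F(Q))=N_S(Q)>Q$, so $Q\notin\F^{fcr}$; and $S\notin\F^{fcr}$ as $O_p(\F)<S$ means $O_p(N_\F(S))=O_p(\F)\neq S$. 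Therefore in Definition~\ref{locrank}(2) the maximum is taken over $R=O_p(\F)$ only (up to $\F$-conjugacy), whence $\rank(\F)=1+\rank(N_\F(O_p(\F)))=1+\rank(\F)$ — this last identity is wrong as stated, so instead I argue directly: every normalised centric-radical chain $\sigma\colon Q_0<Q_1<\cdots<Q_n$ has $N_\F(Q_0)=\F$ and $O_p(\F)<Q_1$, but $Q_1$ must be fully $\F$-normalised, $\F$-centric and $\F$-radical, forcing $Q_1\in\F^{fcr}$ with $O_p(\F)<Q_1$, which we just showed is impossible. So the only normalised centric-radical chains have length $\leq 1$, and $\rank(\F)=1$.

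The main subtlety — and the step I would be most careful about — is the clean bookkeeping of the base case of the recursion in Definition~\ref{locrank}, namely disentangling the role of $Q_0$ (which satisfies only $N_\F(Q_0)=\F$, so $Q_0$ is any normal subgroup between $1$ and $O_p(\F)$) from the requirement $O_p(\F)<Q_1$. The honest version of the converse is the last sentence above: $\F^e=\{O_p(\F)\}$ combined with Alperin's fusion theorem shows $\F^{fcr}$ contains no subgroup strictly above $O_p(\F)$, so no normalised centric-radical chain of length $\geq 2$ exists, while $O_p(\F)<S$ furnishes one of length exactly $1$. I expect no genuine obstacle beyond this: the forward direction is a verbatim transcription of the $\mathrm{rank}_w$ argument (the chain $O_p(\F)<Q<N_S(Q)$ built from an essential $Q$ is already a \emph{normalised} centric-radical chain, not merely a weakly one, since essential subgroups are fully normalised, centric and radical), and the backward direction reuses the Alperin-theorem observation from Proposition~\ref{rank1} unchanged.
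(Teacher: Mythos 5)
Your converse contains a genuine error. You assert that $S\notin\F^{fcr}$ because ``$O_p(\F)<S$ means $O_p(N_\F(S))=O_p(\F)\neq S$''; this is false. Since $N_S(S)=S$, the normaliser $N_\F(S)$ is a fusion system over $S$ in which $S$ is normal, so $O_p(N_\F(S))=S$; moreover, for any saturated $\F$ the group $S$ is fully normalised, $\F$-centric and $\F$-radical (saturation gives $\Inn(S)\in\Syl_p(\Aut_\F(S))$, hence $O_p(\Out_\F(S))=1$), so $S\in\F^{fcr}$ always. Your ``direct'' finish then proves too much: you claim no $Q_1\in\F^{fcr}$ with $O_p(\F)<Q_1$ exists, which would also exclude the chain $O_p(\F)<S$ and yield $\rank(\F)=0$, contradicting the $\rank(\F)\geq 1$ you established just before. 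What you actually need -- and what your Alperin argument does give -- is that no $Q\in\F^{fcr}$ satisfies $O_p(\F)<Q<S$: a normalised centric-radical chain of length $\geq 2$ forces $Q_1<Q_2\leq S$, so $Q_1$ is proper in $S$, while $Q_1=S$ only produces the length-one chain. (Equivalently, in Definition~\ref{locrank}(2) the surviving index is $R=S$, not $R=O_p(\F)$ -- the definition requires $O_p(\F)<R$ -- and $\rank(N_\F(S))=0$ gives $\rank(\F)=1$.) Note that the paper sidesteps all of this for the converse: it quotes Proposition~\ref{rank1} to get $\mathrm{rank}_{w}(\F)=1$ and then uses $\rank(\F)\leq\mathrm{rank}_{w}(\F)$ together with $O_p(\F)<S$.

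There is also a small gap in your forward direction: from $\F^e\neq\{O_p(\F)\}$ you ``pick an essential subgroup $Q$ with $O_p(\F)<Q<S$'', but this hypothesis also allows $\F^e=\emptyset$, where there is nothing to pick; that case must be excluded separately (by Theorem~\ref{fusthm}, $\F^e=\emptyset$ forces $\F=\langle\Aut_\F(S)\rangle$, i.e.\ $O_p(\F)=S$ and $\rank(\F)=0$). You should also justify $O_p(\F)\leq Q$ for essential $Q$, which follows from Lemma~\ref{equivalent} and the Corollary after Lemma~\ref{inclusionparabolic}. The paper instead runs this direction through $\F^{fcr}$: any $Q\in\F^{fcr}\setminus\{S\}$ with $O_p(\F)<Q$ gives the chain $O_p(\F)<Q<N_S(Q)$, so $\F^{fcr}=\{S,O_p(\F)\}$ and hence $\F^e=\{O_p(\F)\}$. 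Your chain construction itself is fine; the two issues above are what need repair.
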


\begin{proof}
Since $\rank(\F)\neq 0$, we have $O_p(\F)<S$.
Then $\F^{fcr}\neq \{S\}$.
Let $Q\in\F^{fcr}\setminus\{S\}$.
Then $O_p(\F)\leq Q<S$.
If $O_p(\F)<Q$, 
then $O_p(\F)< Q<N_S(Q)$ is a normalised centric-radical chain in $\F$.
This contradicts with  $\rank(\F)=1$.
Hence $O_p(\F)=Q$ and $\F^{fcr}=\{S, O_p(\F)\}$.
As $O_p(\F)<S$, we have $\F^e=\{O_p(\F)\}$.

If $\F^e=\{O_p(\F)\}$, then $\mathrm{rank}_{w}(\F)=1$ by Proposition \ref{rank1}.
Since $\rank(\F)\leq \mathrm{rank}_{w}(\F)$, 
it follows that $\rank(\F)\leq1$.
Thus $\rank(\F)=1$ as $O_p(\F)<S$.
\end{proof}

\begin{proposition}\label{rr2}
Let $\F$ be a saturated fusion system over a finite $p$-group $S$.
Then  $\rank(\F)=2$ if and only if
$\F^{e}\setminus\{O_p(\F)\}\neq \emptyset$ and
$N_\F(Q)^e=\{Q\}$ whenever $Q\in\F^{fcr}\setminus\{S, O_p(\F)\}$.
\end{proposition}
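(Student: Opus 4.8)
The plan is to derive the statement directly from the recursive Definition \ref{locrank}, from Proposition \ref{rr1} (which settles the rank-one case), and from Lemma \ref{equivalent} (which identifies $O_p(N_\F(Q))$ with $Q$ for $Q\in\F^{fcr}$). First I would record three preliminary observations. (a) If $\E$ is a saturated fusion system over $T$ with $\E^e\subseteq\{O_p(\E)\}$, then $\rank(\E)\le 1$: if $\E^e=\emptyset$ then Theorem \ref{fusthm} gives $\E=\langle\Aut_\E(T)\rangle$, so $O_p(\E)=T$ and $\rank(\E)=0$, while if $\E^e=\{O_p(\E)\}$ then $\rank(\E)=1$ by Proposition \ref{rr1}. (b) By the corollary to Lemma \ref{inclusionparabolic} combined with Lemma \ref{equivalent}, every $Q\in\F^{fcr}$ satisfies $O_p(\F)\le Q$; moreover $\Out_\F(S)$ is a $p'$-group, so $S\notin\F^e$ and essential subgroups are proper members of $\F^{fcr}$. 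Hence the index set $\{R\in\F^{fcr}\mid O_p(\F)<R\}$ appearing in Definition \ref{locrank} equals $\F^{fcr}\setminus\{O_p(\F)\}$, and deleting its top element $S$ leaves exactly $\F^{fcr}\setminus\{S,O_p(\F)\}$. (c) For $Q\in\F^{fcr}\setminus\{S,O_p(\F)\}$ the fusion system $N_\F(Q)$ is saturated (as $Q$ is fully $\F$-normalised) and has $O_p(N_\F(Q))=Q<N_S(Q)$ (normalisers grow strictly in a $p$-group), so $\rank(N_\F(Q))\ge 1$; also $\rank(N_\F(S))=0$, since $S$ is normal in $N_\F(S)$.

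For the forward implication I would assume $\rank(\F)=2$. Then $\rank(\F)\ge 2$, so $\F^e\not\subseteq\{O_p(\F)\}$ by (a), that is, $\F^e\setminus\{O_p(\F)\}\ne\emptyset$. Given $Q\in\F^{fcr}\setminus\{S,O_p(\F)\}$, observation (b) gives $O_p(\F)<Q$, so $\rank(N_\F(Q))$ is one of the terms in the maximum defining $\rank(\F)$; hence $\rank(N_\F(Q))\le\rank(\F)-1=1$, and together with (c) this forces $\rank(N_\F(Q))=1$. Applying Proposition \ref{rr1} to $N_\F(Q)$ and using Lemma \ref{equivalent}, we get $N_\F(Q)^e=\{O_p(N_\F(Q))\}=\{Q\}$.

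For the converse I would assume $\F^e\setminus\{O_p(\F)\}\ne\emptyset$ together with $N_\F(Q)^e=\{Q\}$ for every $Q\in\F^{fcr}\setminus\{S,O_p(\F)\}$. Choose $P\in\F^e\setminus\{O_p(\F)\}$; by (b), $P\in\F^{fcr}\setminus\{S,O_p(\F)\}$, so $N_\F(P)^e=\{P\}=\{O_p(N_\F(P))\}$ and $\rank(N_\F(P))=1$ by Proposition \ref{rr1}. In particular $O_p(\F)<S$ and the maximum defining $\rank(\F)$ is at least $1$, so $\rank(\F)\ge 2$. For the upper bound, let $R\in\F^{fcr}$ with $O_p(\F)<R$: if $R=S$ then $\rank(N_\F(R))=0$ by (c), and if $R\ne S$ then $R\in\F^{fcr}\setminus\{S,O_p(\F)\}$, so the hypothesis together with Proposition \ref{rr1} gives $\rank(N_\F(R))=1$. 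Hence the maximum equals $1$ and $\rank(\F)=2$.

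The argument carries little substance; the one point needing care is the \emph{bookkeeping} in observation (b) --- aligning the index set of Definition \ref{locrank} with $\F^{fcr}\setminus\{S,O_p(\F)\}$, which uses the corollary to Lemma \ref{inclusionparabolic} (and tacitly that $O_p(\F)$ itself need not be $\F$-centric) --- and remembering that the term $R=S$ is always present in that maximum and contributes $0$ rather than being absent. The remaining ingredients (saturation of $N_\F(Q)$ for fully normalised $Q$, the identity $O_p(N_\F(S))=S$, and the equivalence of $\E^e=\emptyset$ with $O_p(\E)$ being all of $T$) are standard or already recorded in the excerpt.
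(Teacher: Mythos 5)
Your proof is correct and follows essentially the same route as the paper: both directions rest on Proposition \ref{rr1}, Lemma \ref{equivalent} and the recursive Definition \ref{locrank}, with the forward implication argued identically. The only cosmetic difference is in the converse, where you bound every term of the recursive maximum (including the $R=S$ term, which contributes $0$) instead of truncating a longest normalised centric-radical chain at $Q_1$ as the paper does; the two bookkeeping schemes are interchangeable because $\rank(\F)$ equals the length of a longest such chain.
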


\begin{proof}
Assume that $\rank(\F)=2$.
Then we have $\F^{e}\setminus\{O_p(\F)\}\neq \emptyset$ by Proposition \ref{rr1}.
Given $Q\in\F^{fcr}\setminus\{S, O_p(\F)\}$, since $\rank(\F)=2$, we see that $\rank(N_\F(Q))\leq 1$.
Also $O_{p}(N_\F(Q))=Q<N_{S}(Q)$ as $Q\in\F^{fcr}\setminus\{S, O_p(\F)\}$.
Thus $\rank(N_\F(Q))=1$.
Hence $N_\F(Q)^e=\{Q\}$ by Proposition \ref{rr1}.

Assume that $\F^{e}\setminus\{O_p(\F)\}\neq \emptyset$.
Let $P\in\F^{e}\setminus\{O_p(\F)\}$. 
Then $O_{p}(\F)<P<N_{S}(P)$ is a normalised centric-radical chain in $\F$.
Thus $\mathrm{rank}_{l}(\F)\geq 2$.
Suppose that $N_\F(Q)^e=\{Q\}$ for each $Q\in\F^{fcr}\setminus\{S, O_p(\F)\}$.
Let $\sigma\colon Q_0<Q_1< \cdots<Q_n$ be one of the longest  normalised centric-radical chain in $\F$.
Then $Q_1\in\F^{fcr}\setminus\{S, O_p(\F)\}$ and $N_\F(Q_1)^e=\{Q_1\}$.
By Proposition \ref{rr1}, 
$\rank(N_\F(Q_1))=1$.
As $Q_1< \cdots<Q_n$ is one of the longest  normalised centric-radical chain in $N_\F(Q_1)$, 
hence $n=2$ and $\rank(\F)=2$.
\end{proof}

\noindent\textbf{Acknowledgement.} While writing this paper, J.L. and B.W. were visiting Mathematics Department, University of Aberdeen. They would like to thank both China Scholarship Council and the Department for their support.

\bibliographystyle{amsalpha}

\end{document}